\documentclass[10pt, a4paper]{amsart} 
\usepackage[left=2.50cm, right=2.50cm, top=2.50cm, bottom=2.50cm]{geometry} 
\usepackage{amscd, amsfonts, amsmath, amssymb, amsthm, arydshln, fixmath, graphicx, mathrsfs, overpic, tikz, tikz-cd}
\usepackage{hyperref}
\usepackage[all]{xy} 
\usepackage{tikz}
\usepackage{tikz}
\usetikzlibrary{arrows}
\usetikzlibrary{decorations.pathmorphing}
\usepackage{dsfont}
\makeindex

\theoremstyle{definition} 
\newtheorem{Unity}{Unity}[section] 
\newtheorem*{Definition*}{Definition} 
\newtheorem{Definition}[Unity]{Definition} 

\theoremstyle{plain} 
\newtheorem*{Theorem*}{Theorem}
\newtheorem{Theorem}[Unity]{Theorem}
\newtheorem{Proposition}[Unity]{Proposition}
\newtheorem{Corollary}[Unity]{Corollary}
\newtheorem{Lemma}[Unity]{Lemma}

\theoremstyle{remark} 
\newtheorem*{Remark*}{Remark}
\newtheorem{Remark}[Unity]{Remark}

\numberwithin{Unity}{section}

\newcommand{\Hom}{\mathrm{Hom}}

\newcommand{\Rep}{\mathrm{Rep}}

\newcommand{\Spec}{\mathrm{Spec\,}}

\newcommand{\Coh}{\mathfrak{Coh}}
\newcommand{\Qcoh}{\mathfrak{Qcoh}}
\newcommand{\Vect}{\mathfrak{Vect}}

\newcommand{\Ext}{\mathrm{Ext}}
\newcommand{\depth}{\mathrm{depth}}
\newcommand{\GLF}{\mathrm{GLF}}
\newcommand{\codim}{\mathrm{codim}}
\newcommand{\chara}{\mathrm{char\,}}

\begin{document}

\title{The Lefschetz Type Theorem For Fundamental Group Schemes}
\author{Lingguang Li}
\address{School of Mathematical Sciences,
Key Laboratory of Intelligent Computing and Applications (Tongji University), Ministry of Education, Shanghai 200092, CHINA}
\email{LiLg@tongji.edu.cn}
\author{Niantao Tian}
\address{School of Mathematical Sciences,
Key Laboratory of Intelligent Computing and Applications (Tongji University), Ministry of Education, Shanghai 200092, CHINA}
\email{tianniantao@tongji.edu.cn}
\begin{abstract} Let $k$ be a field, $X$ a connected scheme proper over $k$, $D\subsetneq X$ an ample effective connected divisor, $x\in D(k)$. For Tannakian categories $\mathcal{C}_X$ and $\mathcal{C}_D$ whose objects consist of vector bundles on $X$ and $D$ respectively, we establish general Tannakian criteria for the natural homomorphism \(\pi(\mathcal{C}_D,x)\to \pi(\mathcal{C}_X,x)\) to be faithfully flat, a closed immersion, or an isomorphism. As applications, under Langer type positivity assumptions, we prove that \(\pi^{\ast}(D,x)\longrightarrow \pi^{\ast}(X,x)\) is an isomorphism for $\ast\in\{S,N,EN,F, EF,Loc,ELoc,\acute{e}t,E\acute{e}t,uni\}$ over perfect fields. 
\end{abstract}
\maketitle
\tableofcontents

\section{Introduction}

In algebraic geometry and arithmetic geometry, understanding the behavior of fundamental groups under geometric operations is a central theme. Among the foundational results in this direction is the Lefschetz hyperplane theorem. In its classical form for the \'etale fundamental group, Grothendieck \cite{Gro60} proved that if $X$ is a smooth projective variety over an algebraically closed field $k$ and $D \subsetneq X$ is a smooth ample divisor, then the natural homomorphism \(\pi_1^{\acute{e}t}(D, \bar{x}) \to \pi_1^{\acute{e}t}(X, \bar{x})\) is an isomorphism for $\dim X \ge 3$. This theorem has had far-reaching consequences in both topology and arithmetic geometry.

With the development of the theory of fundamental group schemes through Tannakian categories, several analogues of the classical fundamental group have been introduced by considering different categories of vector bundles. Nori \cite{Nor76, Nor82} introduced the Nori fundamental group scheme $\pi^N(X,x)$ and the unipotent fundamental group scheme $\pi^{uni}(X,x)$. Later, Mehta and Subramanian \cite{MeSu08} defined the local fundamental group scheme $\pi^{Loc}(X,x)$ in positive characteristic, while Amrutiya and Biswas \cite{AmBi10} introduced the F-fundamental group scheme $\pi^F(X,x)$. Langer \cite{Lan11} systematically studied the S-fundamental group scheme $\pi^S(X,x)$, and Otabe \cite{Ota17} introduced the EN-fundamental group scheme $\pi^{EN}(X,x)$. The study of Lefschetz type theorems for these fundamental group schemes was initiated by Biswas and Holla \cite{BiHo07}, who showed that if $X$ is a smooth projective variety of dimension $d \ge 3$ over an algebraically closed field $k$ and $L$ is an ample line bundle over X, then there exists integer $d_0>0$ such that for any $d>d_0$ and any smooth divisor $D\in |L^{\otimes d}|$, the natural homomorphism $\pi^{N}(D,x)\rightarrow \pi^{N}(X,x)$ is an isomorphism. Langer \cite{Lan11} showed that if $X$ is a smooth projective variety of dimension $d \ge 3$ over an algebraically closed field $k$, and $D \subsetneq X$ is a sufficiently positive smooth ample divisor, then the natural homomorphisms
\[
\pi^S(D,x)\to\pi^S(X,x), \qquad
\pi^N(D,x)\to\pi^N(X,x), \qquad
\pi^{\acute{e}t}(D,x)\to\pi^{\acute{e}t}(X,x)
\]
are isomorphisms. 

More generally, Lefschetz type problems for fundamental group schemes ask how much of the global Tannakian information of a variety can be recovered from an ample divisor. This question is especially subtle in positive characteristic. In that setting, there are several notions of fundamental group schemes, and vector bundles often behave in a delicate way under restriction. A general framework is therefore desirable. It can help clarify the relations among fundamental group schemes and reveal the categorical ideas behind geometric arguments. From this point of view, the main issues are the behavior of restriction functors, the preservation of irreducibility, and extension properties in suitable Tannakian categories.

The aim of this paper is to develop a general and purely Tannakian framework for Lefschetz type theorems. This framework not only recovers known results for several specific fundamental group schemes, but also extends them to perfect fields and to a wider class of fundamental group schemes. Our method is based on a careful study of the restriction functor to an ample divisor and its relation to the saturation of Tannakian categories. In particular, we give necessary and sufficient conditions for the induced homomorphism of Tannaka group schemes to be faithfully flat, a closed immersion, or an isomorphism.

By applying the general Tannakian framework to the specific categories of vector bundles that define various fundamental group schemes, we obtain the following Lefschetz type isomorphisms, generalizing the results of Langer \cite{Lan11} to perfect fields and extending them to new fundamental group schemes.

\begin{Theorem}[Proposition~\ref{App0} \& Proposition~\ref{app+} \& Proposition~\ref{applift}]
Let $k$ be a perfect field, $X$ a smooth variety proper over $k$ of dimension $d$, $D\subsetneq X$ a smooth ample effective divisor, $x\in D(k)$. Then 
\begin{enumerate}
    \item Let $\chara k=0$, $X$ be a smooth projective variety, $*\in\{S,\acute{e}t,E\acute{e}t,uni\}$. Then
    \begin{enumerate}
        \item If $d\geq 2$, then the induced homomorphism $\pi^*(D,x)\rightarrow \pi^{*}(X,x)$ is faithfully flat.
        \item If $d\geq 3$, then the induced homomorphism $\pi^*(D,x)\rightarrow \pi^*(X,x)$ is an isomorphism.
    \end{enumerate}
    \item Let $\chara k=p>0$, $X$ be a smooth projective variety over $k$ of dimension $d\geq 3$, $H$ an ample divisor on $X$, $\alpha$ a nonnegative integer such that $T_X(\alpha H)$ is globally generated, $D-\alpha H$ ample and $DH^{d-1}> \max\{p\alpha H^d,(d+1)\alpha H^d-K_X H^{d-1}\}$, $*\in\{S,N,EN,F, EF,Loc,ELoc,\acute{e}t,E\acute{e}t,uni\}$. Then the induced homomorphism $\pi^*(D,x)\rightarrow \pi^*(X,x)$ is an isomorphism.
    \item Let $\chara k=p>0$, $X$ be liftable to $W_2(k)$, $*\in\{N,EN,F, EF,Loc,ELoc,\acute{e}t,E\acute{e}t,uni\}$. Then
    \begin{enumerate}
        \item If $d\geq 2$, then the induced homomorphism $\pi^*(D,x)\rightarrow \pi^*(X,x)$ is faithfully flat.
        \item If $d\geq 3$ and $p\geq 3$, then the induced homomorphism $\pi^*(D,x)\rightarrow \pi^*(X,x)$ is an isomorphism.
    \end{enumerate}
\end{enumerate}
\end{Theorem}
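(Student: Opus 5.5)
The proof has two stages. First I set up a general Tannakian criterion for the restriction functor $\iota^*\colon \mathcal{C}_X\to\mathcal{C}_D$, $E\mapsto E|_D$. Then I verify its hypotheses for each category of vector bundles in the statement. For the criterion I use the standard characterizations of maps of Tannaka groups (Deligne–Milne, Esnault–Hai–Sun):
\begin{itemize}
\item $\pi(\mathcal{C}_D,x)\to\pi(\mathcal{C}_X,x)$ is faithfully flat iff $\iota^*$ is fully faithful and its essential image is closed under subobjects.
\item It is a closed immersion iff every object of $\mathcal{C}_D$ is a subquotient of some $\iota^*E$.
\end{itemize}
For full faithfulness it suffices to show $H^0(X,E)\to H^0(D,E|_D)$ is bijective for $E\in\mathcal{C}_X$, using internal Homs. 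I would reduce this to a vanishing statement. By the sequences $0\to E(-(m+1)D)\to E(-mD)\to E(-mD)|_D\to 0$, it is enough to have $H^0(D,E(-mD)|_D)=0$ for $m\ge1$ together with $H^1$-vanishing of $E(-mD)$ for $m\gg0$. The latter is Serre duality plus Enriques–Severi–Zariski on $X$ (depth $\geq2$, $d\ge2$). The former is automatic when $E|_D$ is semistable of degree $0$, because $\mathcal{O}_D(-mD)$ is anti-ample.

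Closure under subobjects is the step where irreducibility enters. Take $F\subset E|_D$ in $\mathcal{C}_D$. By Jordan–Hölder one may assume $E$ is irreducible, and then one must show $E|_D$ stays irreducible (or that $F$ lifts). I would argue via the determinant: $\wedge^r F\subset\wedge^r E|_D$ is a degree-$0$ line subbundle, hence a section of $\wedge^rE^\vee|_D\otimes\det F$. Then use full faithfulness on $\wedge^r E$ after twisting.

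For the closed-immersion part with $d\ge3$, I would show every $F\in\mathcal{C}_D$ extends to $X$. The approach is formal functions: lift $F$ successively to the thickenings $mD$, obstructed in $H^2(D,\mathcal{E}nd F(-mD))$ and with uniqueness controlled by $H^1(D,\mathcal{E}nd F(-mD))$. Then algebraize to a bundle on a neighborhood, and extend across the complement via Grothendieck's Lefschetz conditions $\mathrm{Lef}$/$\mathrm{Leff}$ (SGA2), which need $d\geq3$. Finally, check that the extension lies in $\mathcal{C}_X$: semistability, finiteness, or F-triviality is detected on $D$ through the equivalence already established.

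The geometric input for these vanishings differs by case.
\begin{itemize}
\item In characteristic $0$, I use Kodaira/Le Potier vanishing for semistable bundles. The étale and unipotent groups are quotients of $\pi^S$, and the saturation formalism passes the result to quotient categories, with $E\acute{e}t$ handled by the saturation of étale-trivializable bundles.
\item In case (2), the needed bounds $H^i(D,\mathcal{E}nd F(-mD))=0$ for $i=1,2$, $m\ge1$, are Langer's strong-semistability vanishing under exactly the numerical hypotheses on $\alpha$, $H$, $D$ (his Theorem on restriction and Bogomolov-type inequality). Perfectness of $k$ is used to reduce to $\bar k$ via Galois descent of the Tannaka groups. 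For $F$-, local-, and Nori-type categories I use that they are full subcategories of the $S$-category closed under the relevant operations, so the isomorphism for $\pi^S$ descends.
\item In case (3), I replace Langer's vanishing by Deligne–Illusie Kodaira–Akizuki–Nakano vanishing, which holds for $W_2$-liftable $X$ in dimension $<p$ (or Raynaud's version). Applied to finite/essentially finite bundles, which become trivial on a finite torsor that is again liftable, this gives $H^i(D,\mathcal{E}nd F(-m D))=0$. The condition $p\ge3$ is needed there.
\end{itemize}

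The main obstacle is the closed-immersion/extension step. This means verifying the $H^2$-vanishing uniformly in $m$ for every object of $\mathcal{C}_D$, and checking that the extended bundle lands in the right category (for example, that an extension of an $F$-trivial bundle is $F$-trivial). For the latter I would first try functoriality: apply the equivalence to $F^{n*}E$ and use that triviality on $D$ forces triviality on $X$ by the full faithfulness already proved.
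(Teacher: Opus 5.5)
Your plan amounts to reproving Langer's theorems for $\pi^S$ from scratch, and several of the geometric steps fail as written.

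(i) The full-faithfulness reduction is in the wrong cohomological degree. $H^0(D,E(-mD)|_D)=0$ for all $m\ge 1$ does give $H^0(X,E(-D))=0$, i.e. faithfulness. On $H^1$, however, it only gives injections $H^1(X,E(-(m+1)D))\hookrightarrow H^1(X,E(-mD))$, and these cannot carry the vanishing of $H^1(X,E(-MD))$, $M\gg 0$, down to $m=1$. Fullness needs $H^1(X,E(-D))=0$. Via thickenings this requires $H^1(D,E(-mD)|_D)=0$ for all $1\le m<M$. When $D$ is a curve ($d=2$) this fails for $m\gg 0$, since the group is dual to $H^0(D,E^\vee|_D\otimes\omega_D(mD))$.

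(ii) The determinant trick for closure under subobjects only works if $\det F$ is the restriction of a numerically trivial line bundle on $X$. Nothing in your argument provides this; for $d=2$, $\mathrm{Pic}^\tau(D)$ is much larger than the image of $\mathrm{Pic}^\tau(X)$. Knowing $\det F\in\langle E|_D\rangle$ only helps once faithful flatness is known, so the argument is circular. This irreducibility condition is genuinely independent of full faithfulness (cf. Proposition~\ref{generalsur}).

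(iii) For the isomorphism statements at $d=3$, $D$ is a surface, and the obstruction group $H^2(D,\mathcal{E}nd F(-mD))$ is top-degree cohomology. It is Serre dual to $H^0(D,\mathcal{E}nd F\otimes\omega_D(mD))\supseteq H^0(D,\omega_D(mD))\neq 0$ for $m\gg0$. So the uniform vanishing you attribute to Langer is simply false. Even after algebraizing on a neighbourhood $U\supset D$, $j_*E_U$ is only reflexive at the finitely many points of $X\setminus U$, and $\mathrm{Lef}/\mathrm{Leff}$ say nothing about local freeness there.

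(iv) In case (3), trivializing torsors under infinitesimal group schemes need not be smooth or liftable to $W_2$, so Deligne--Illusie cannot be applied on them. This would also not cover $F$-, $Loc$- or unipotent bundles.

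The missing idea is that none of this needs to be redone. The paper takes Langer's results for $\pi^S$ as input: Theorem~11.4 over $\mathbb{C}$ plus the Lefschetz principle in characteristic $0$, Theorem~10.4 over $\bar k$ in case (2), and Corollary~11.3 in the $W_2$-liftable case. It transports them to $k$ via $\pi^S(X_{\bar k},\bar x)\cong\pi^S(X,x)_{\bar k}$ (Lemma~\ref{LiTianbasechange}; this is where perfectness, i.e. separability of $\bar k/k$, enters) and Lemma~\ref{isofield}. Everything else is then formal.

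Faithful flatness of $\pi^*(D,x)\to\pi^*(X,x)$ follows because $\pi^S(D,x)\twoheadrightarrow\pi^S(X,x)\twoheadrightarrow\pi^*(X,x)$ factors through $\pi^*(D,x)$. For isomorphisms, Proposition~\ref{isosubcate} reduces everything to one criterion: if $E\in\mathcal{C}^{NF}(X)$ and $E|_D\in\mathcal{C}^*(D)$, then $E\in\mathcal{C}^*(X)$. This holds because restriction is an equivalence on numerically flat bundles. Hence relations such as $g(E|_D)\cong h(E|_D)$, $F^{m*}(E|_D)\cong F^{n*}(E|_D)$, triviality of $F^{n*}(E|_D)$, or a unipotent filtration all lift to $X$. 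In the étale case, the monodromy group of $\langle E\rangle$ is a quotient of the finite étale group of $\langle E|_D\rangle$. The extended versions then follow from Proposition~\ref{isoSerre}.

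Your closing remark about applying the equivalence to $F^{n*}E$ is exactly this descent step. Note, however, that the right criterion is not that the subcategories are ``closed under the relevant operations'', but that membership in $\mathcal{C}^*$ is detected after restriction.
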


\section{Preliminaries}

Let $k$ be a field, $K/k$ a field extension, $f:X\rightarrow S$ a morphism of schemes over $k$, $s\in S(k)$, $G$ an affine group scheme over $k$, $\Qcoh(X)$ the category of quasi-coherent sheaves on $X$, $\Vect(X)$ the category of vector bundles on $X$. Consider the following Cartesian diagrams
\[
    \begin{aligned}
        \begin{tikzcd}
     X_s\arrow{r}{t} \arrow{d}{g} & X \arrow{d}{f}\\
     \Spec(\kappa(s))\arrow{r}{s} & S
\end{tikzcd}&\quad\quad&\begin{tikzcd}
            X\times_{\Spec k}\Spec K\arrow[r,"p"]\arrow[d]& X\arrow[d]\\
            \Spec K\arrow[r]&\Spec k
        \end{tikzcd}&\quad\quad&\begin{tikzcd}
            G\times_{\Spec k}\Spec K\arrow[r,"p"]\arrow[d]& G\arrow[d]\\
            \Spec K\arrow[r]&\Spec k
        \end{tikzcd}
    \end{aligned}
\]
We denote $X_K := X \times_{\Spec k} \Spec K$, $G_K := G \times_{\Spec k}\Spec K$ and $x_K \in X_K(K)$ a point lying over $x\in X(k)$. For a coherent sheaf $E$ on $X$, we denote its pullback $p^*E$ simply by $E \otimes_k K$. 

\begin{Lemma}[{\cite[Chapter~VII, Proposition~2.3 \& 7.2]{Mil12}}]\label{isofield}
    Let $k$ be a field, $K/k$ a field extension, $f:G\rightarrow H$ a homomorphism of affine group schemes over $k$. Then $f$ is an isomorphism $($resp. faithfully flat or a closed immersion$)$ iff $f_K:G_K\rightarrow H_K$ is an isomorphism $($resp. faithfully flat or a closed immersion$)$.
\end{Lemma}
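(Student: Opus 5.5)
The plan is to translate each property of $f$ into a statement about the Hopf algebra map $f^\#:\mathcal{O}(H)\to\mathcal{O}(G)$ and use that $\mathcal{O}(G_K)=\mathcal{O}(G)\otimes_k K$ and $f_K^\#=f^\#\otimes_k K$. The key input is that $K$ is faithfully flat over $k$: every nonzero $k$-vector space stays nonzero after $-\otimes_k K$, and $-\otimes_k K$ is exact.

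\emph{Isomorphism and closed immersion.} The map $f$ is an isomorphism iff $f^\#$ is bijective. It is a closed immersion iff $f^\#$ is surjective. By exactness, $\ker(f^\#)\otimes_k K=\ker(f^\#_K)$ and $\mathrm{coker}(f^\#)\otimes_k K=\mathrm{coker}(f^\#_K)$. Since a $k$-vector space $V$ vanishes iff $V\otimes_k K$ does, both equivalences follow at once. For closed immersions I would use the standard fact that closed subschemes of the affine scheme $H$ correspond to quotients of $\mathcal{O}(H)$. The closed-immersion condition is then exactly surjectivity of $f^\#$.

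\emph{Faithful flatness.} This is where the work lies. One direction is formal: if $\mathcal{O}(G)$ is faithfully flat over $\mathcal{O}(H)$, base change preserves faithful flatness, so $f_K$ is faithfully flat. For the converse, first note that flatness descends: for an $\mathcal{O}(H)$-module map $M'\to M$ that is injective, injectivity of $(M'\otimes_{\mathcal{O}(H)}\mathcal{O}(G))\to (M\otimes_{\mathcal{O}(H)}\mathcal{O}(G))$ can be checked after $\otimes_k K$. After base change these become $M'_K\otimes_{\mathcal{O}(H_K)}\mathcal{O}(G_K)\to M_K\otimes_{\mathcal{O}(H_K)}\mathcal{O}(G_K)$, which is injective by flatness of $f_K$. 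Surjectivity on spectra also descends, since $\Spec\mathcal{O}(H_K)\to\Spec\mathcal{O}(H)$ is surjective.

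The main obstacle is that for general, not necessarily finite type, affine group schemes the condition ``flat and surjective'' should be phrased carefully. I would instead use the Hopf-algebra characterization from Milne / Waterhouse: $f$ is faithfully flat iff $f^\#$ is injective (Waterhouse, Thm.~14.1, valid over a field for arbitrary affine group schemes). With this, faithful flatness reduces to injectivity of $f^\#$, and the injectivity argument above handles it uniformly. This makes the whole lemma reduce to exactness and faithfulness of $-\otimes_k K$ on $k$-vector spaces.
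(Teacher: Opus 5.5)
Your proposal is correct and is essentially the argument behind the cited results in Milne (the paper gives no proof of its own). That argument reduces everything to the coordinate-ring map $f^\#$: closed immersions correspond to surjectivity of $f^\#$, and faithfully flat homomorphisms to injectivity of $f^\#$, via the theorem that an inclusion of Hopf algebras over a field is faithfully flat. One then uses that $-\otimes_k K$ is exact and faithful on $k$-vector spaces.

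Your first, direct descent argument (flatness plus surjectivity on spectra) also works on its own. A ring map is faithfully flat iff it is flat and surjective on spectra, with no finiteness hypotheses, so the ``obstacle'' you raise is not really one.
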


\begin{Theorem}[{\cite[Theorem A.1]{EHS07}}]\label{Thm1}
Let $L\xrightarrow{q}G\xrightarrow{p} A$ be a sequence of homomorphisms of affine group schemes over a field $k$. It induces a sequence of functors:
$$\Rep_k^f(A)\xrightarrow{p^*}\Rep_k^f(G)\xrightarrow{q^*}\Rep_k^f(L),$$
where $\Rep_k^f$ denotes the category of finite dimensional representations over $k$. Then we have the following:
\begin{enumerate}
    \item[(1)] The group homomorphism $p: G\rightarrow A$ is surjective $($faithfully flat$)$ iff  $p^*\Rep_k^f(A)$ is a full subcategory of $\Rep_k^f(G)$ and closed under taking subobjects $($subquotients$)$.
    \item[(2)] The group homomorphism $q:L\rightarrow G$ is injective $($a closed immersion$)$ iff any object of $\Rep_k^f(L)$ is a subquotient of an object of the form $q^*(V)$ for some $V\in \Rep_k^f(G)$.
    \item[(3)] Suppose $p$ is faithfully flat. Then the sequence $L\xrightarrow{q}G\xrightarrow{p} A$ is exact iff the following conditions are fulfilled:
    \begin{enumerate}
        \item[(a)] For an object $V\in \Rep_k^f(G)$,  $q^* V\in\Rep_k^f(L)$ is trivial iff $V\cong p^*U$ for some $U\in\Rep_k^f(A)$.\label{3a}
        \item[(b)]\label{3b} Let $W_0$ be the maximal trivial subobject of $q^*V$ in $\Rep_k^f(L)$. Then there exists $V_0\hookrightarrow V$ in $\Rep_k^f(G)$ such that $q^*V_0\cong W_0$.
        \item[(c)] For any $W\in\Rep_k^f(G)$ and any quotient $q^*W\twoheadrightarrow W'\in\Rep_k^f(L)$, there exists $V\in \Rep_k^f(G)$  and an embedding $W'\hookrightarrow q^*V$.
    \end{enumerate}
\end{enumerate}
\end{Theorem}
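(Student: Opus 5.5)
Since this is the cited criterion of Esnault--Hai--Sun (Theorem A.1 of \cite{EHS07}), I would prove it by working with coordinate Hopf algebras and the standard dictionary between subgroups and quotients of affine group schemes and their comodule categories. Write $k[A]$, $k[G]$, $k[L]$ for the coordinate rings. Then $p$ faithfully flat is equivalent to $p^\#:k[A]\to k[G]$ being injective, and $q$ a closed immersion is equivalent to $q^\#:k[G]\to k[L]$ being surjective. Every comodule is the union of its finite-dimensional subcomodules, and every finite-dimensional comodule embeds into $k[G]^{\oplus n}$.

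For (1), the forward direction is standard. A $G$-stable subspace of $p^*U$ is $A$-stable because $k[A]\subset k[G]$: the coaction lands in $U\otimes k[A]$ exactly when it does so inside $U\otimes k[G]$. Fullness follows the same way, applied to graphs of morphisms.

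For the converse, let $B=p^\#(k[A])\subset k[G]$, a Hopf subalgebra, and let $A'$ be the corresponding quotient of $G$, which exists by faithful flatness over Hopf subalgebras. Then $p$ factors as $G\twoheadrightarrow A'\hookrightarrow A$. The subobject and subquotient hypothesis lets one identify $\Rep(A')$ with the image category. Fullness then forces $\Rep(A)\to\Rep(A')$ to be fully faithful and closed under subobjects. A closed immersion with this property must be an isomorphism: otherwise, by Chevalley's theorem, there is a representation $V$ of $A$ and a line in $V$ stabilized by $A'$ but not by $A$, which contradicts closure under subobjects.

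For (2), the forward direction follows from the surjection $k[G]\twoheadrightarrow k[L]$. Any $W$ embeds in $k[L]^n$, hence in a quotient of some finite $G$-subcomodule of $k[G]^n$. Conversely, if every $W$ is a subquotient of some $q^*V$, the matrix coefficients of $W$ lie in the image of $q^\#$. Since the coefficients of all $W$ span $k[L]$, the map $q^\#$ is surjective.

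For (3), I would set $K=\ker p$ and first prove that $L\to K$ is an isomorphism. Conditions (a) and (b) identify $\Rep(A)$ with the $G$-representations that are trivial on $L$, and show that the $L$-invariants $V^L$ come from a $G$-subobject. From this I would deduce that the $L$-invariants in $k[G]$ under right translation equal $k[A]$, so $k[G]^L=k[G]^K$. Condition (c) plus (2), applied to the quotient-closure, should give that $L\to G$ is a closed immersion and that $L$ is normal, via the quotient criterion. Finally, for a closed subgroup $L\subset K$ with $k[G]^L=k[G]^K$, one concludes $L=K$ using $G/L$ affine and $k[G/L]=k[G]^L$.

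The main obstacle is this last normality and equality step. For the full argument I would follow \cite[Appendix A]{EHS07} directly.
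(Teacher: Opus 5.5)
The paper gives no proof of this statement; it is quoted from Esnault--Hai--Sun, and parts (1) and (2) there are the Deligne--Milne criteria. Your Hopf-algebra arguments for (1) and (2) are the standard ones and are fine. (In (1), Chevalley's theorem has to be applied to an algebraic quotient of $A$ when $A$ is not of finite type.)

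Part (3), however, has a genuine gap.

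\textbf{Necessity and the meaning of ``exact''.} You do not treat necessity at all. Condition (c) for $K=\ker p$ is not formal: it uses that $G/K\cong A$ is affine, for example via exactness of induction from $K$ followed by dualizing. In the sufficiency direction, your claim that (c) together with (2) forces $q$ to be a closed immersion is false. Take $A=1$ and $q\colon G\times H\to G$ the projection. Then (a), (b) and (c) all hold, because $q^*$ is fully faithful with essential image closed under subquotients, yet $q$ is not injective. Since only $p$ is assumed faithfully flat, ``exact'' must mean $\mathrm{im}(q)=\ker p$, so your target ``$L\to K$ is an isomorphism'' is the wrong one. You should first replace $L$ by its scheme-theoretic image, which does not affect (a)--(c).

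\textbf{The decisive step.} The step you defer to the reference is where the real content lies. The facts $L\subseteq K$ and $k[G]^L=k[G]^K$ do not give $L=K$: for a Borel subgroup $B\subset SL_2$ one has $k[SL_2]^B=k$. Nor does (c) give normality: the unipotent radical of $B$ satisfies (c) but is not normal. What (c) actually gives, once $q$ is a closed immersion, is that every $W\in\Rep_k^f(L)$ embeds into some $q^*V'$. Indeed, a subquotient $W_2/W_1$ of $q^*V$ sits inside $q^*V/W_1$, which embeds by (c).

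This observability property, together with (a) and (b), closes the argument directly:
\begin{enumerate}
\item Suppose $K\not\subseteq L$, and choose an algebraic quotient $\pi\colon G\to G_i$ with $\pi(K)\not\subseteq\pi(L)$.
\item By Chevalley, there is a $G_i$-representation $V$ with a line $kv$ whose stabilizer is $\pi(L)$. Let $L$ act on $kv$ by the character $\chi$.
\item Embed $\chi^{-1}$ into some $q^*V'$ via a vector $v'$. Then $v\otimes v'\in(V\otimes V')^L$, and its fixator in $G$ lies in $\pi^{-1}(\pi(L))$.
\item By (b), $(V\otimes V')^L$ is a $G$-subrepresentation. It is trivial on $L$, so by (a) it is of the form $p^*U$.
\item Hence $K$ fixes $v\otimes v'$, so $\pi(K)\subseteq\pi(L)$, a contradiction.
\end{enumerate}
This argument needs neither normality of $L$ nor affineness of $G/L$ as input.
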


\begin{Definition}
    Let $k$ be a field, $X$ a connected scheme proper over $k$, $x\in X(k)$, $\mathcal{C}_X$ a Tannakian category over $X$ whose objects consist of vector bundles on $X$ with fibre functor $|_x:E\mapsto E|_x$, and we denote its Tannaka group scheme by $\pi(\mathcal{C}_X,x)$.
\end{Definition}

Let $k$ be a field, $X$ a connected scheme proper over $k$, $x\in X(k)$, $\mathcal{C}_X$ a Tannakian category whose objects consist of vector bundles on $X$ with fibre functor $|_x$, and $\pi(\mathcal{C}_X,x)$ its corresponding Tannaka group scheme. For any subset $M$ of $\mathcal{C}_X$, we denote $$M_1:=\{E~|~E\in M\text{ or }E^\vee\in M\}\text{, }M_2:=\{~E_1\otimes E_2\otimes\cdots \otimes E_m~|~E_i\in M_1, 1\leq i\leq m, m\in\mathbb{N}\},$$
$$\langle M\rangle_{\mathcal{C}_X}:=
\left\{ 
    E\in \mathcal{C}_X \left\vert 
    \begin{split}
        & \exists F_i\in M_2,1\leq i\leq t, \text{ and } E_1,E_2\in \mathcal{C}_X\\
        & s.t. \text{ }E_1\hookrightarrow E_2\hookrightarrow\bigoplus^{t}_{i=1}F_i,\text{ and } E\cong E_2/E_1
    \end{split}\right.
\right\}.
$$
Then $(\langle M\rangle_{\mathcal{C}_X},\otimes,|_x,\mathcal{O}_X)$ is a Tannakian subcategory of $(\mathcal{C}_X,\otimes,|_x,\mathcal{O}_X)$ and we denote its Tannaka group scheme by $\pi(\langle M\rangle_{\mathcal{C}_X},x)$.

\begin{Definition}
    Let $k$ be a field, $X$ a connected scheme proper over $k$, $\mathcal{V}_X$ a full subcategory of $\Vect(X)$. Define the \textit{saturation category} $\overline{\mathcal{V}}_X$ of $\mathcal{V}_X$ as the full subcategory of $\Vect(X)$ whose objects are those $E$ for which there exists a filtration
    $$0\hookrightarrow E_1 \hookrightarrow \cdots\hookrightarrow E_n=E,$$
    such that $E^i=E_{i+1}/E_i\in\mathcal{V}_X$ for any $i$.
\end{Definition}

\begin{Remark}
    Let $k$ be a field, $X$ a connected scheme proper over $k$, $x\in X(k)$, $\mathcal{V}_X$ a full subcategory of $\Vect(X)$. If $\mathcal{V}_X$ is a rigid (resp. tensor) subcategory of $\Vect(X)$, then $\overline{\mathcal{V}}_X$ is also a rigid (resp. tensor) subcategory of $\Vect(X)$. In particular, if $\mathcal{C}_X$ is a neutral Tannakian category over $X$ with fibre functor $|_x$, then by \cite[Proposition~3.3]{LiTiBa26}, $\overline{\mathcal{C}}_X$ is a neutral Tannakian category over $X$ with fibre functor $|_x$, and $\mathcal{C}_X$ is a Tannakian subcategory of $\overline{\mathcal{C}}_X$, which induces a faithfully flat homomorphism $\pi(\overline{\mathcal{C}}_X,x)\twoheadrightarrow \pi(\mathcal{C}_X,x)$.
\end{Remark}

\begin{Remark}
    Let $k$ be a field, $f:X\rightarrow S$ a morphism of connected schemes proper over $k$, $x\in X(k)$ lying over $s\in S(k)$, $\mathcal{C}_X,\mathcal{C}_S$ the Tannakian categories over $X$, $S$ respectively. If pullback induces a functor $f^*:\mathcal{C}_S\rightarrow \mathcal{C}_X$, then pullback also induces a functor $f^*:\overline{\mathcal{C}}_S\rightarrow \overline{\mathcal{C}}_X$ by \cite[Lemma~4.5]{LiTiKu26}. 
\end{Remark}

\begin{Definition}
    Let $X$ be a locally noetherian scheme, $E$ a coherent sheaf on $X$, $n$ a nonnegative integer. $X$ is said to satisfy \textit{Serre's condition} $S_n$, if 
    $\depth(\mathcal{O}_{X,x})\geq \min\{n,\dim(\mathcal{O}_{X,x})\}$
    for any $x\in X$. $E$ is said to satisfy \textit{Serre's condition} $S_n$, if 
    $\depth(E_{x})\geq \min\{n,\dim(\mathcal{O}_{X,x})\}$
    for any $x\in X$. For simplicity, we say $X$ satisfies $S_n$ and $E$ satisfies $S_n$.
\end{Definition}

\begin{Definition}[{\cite[p.164]{Har70}}]
    Let $X$ be a scheme, $D\subsetneq X$ a closed subscheme. Define $\GLF(D)$ to be the category of germs of locally free sheaves around $D$. The objects of $\GLF(D)$ are equivalence classes of pairs $(E_U,U)$ where $U\subseteq X$ is an open subset with $D\subsetneq U$ and $E_U\in\Vect(U)$. Two pairs $(E_U,U)$ and $(E_V,V)$ are equivalent if there exists open subset $W\subseteq U\cap V$ with $D\subsetneq W$ such that $E_U|_W\cong E_V|_W$. The morphism between two objects $(E_U,U)$ and $(E_V,V)$ is a morphism of locally free sheaves between $E_U|_{U\cap V}$ and $E_V|_{U\cap V}$.
\end{Definition}

\begin{Lemma}[cf. {\cite[Proposition~1.11]{Har94}}]\label{pushforwardopenimmersion}
    Let $X$ be a noetherian scheme, $U\subsetneq X$ an open subset with complement of codimension $\geq 2$, $i:U\rightarrow X$ the open immersion, $E$ a coherent sheaf on $X$ satisfying $S_2$. Then the natural map $E\rightarrow i_*(E|_U)$ is an isomorphism.
\end{Lemma}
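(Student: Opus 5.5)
The claim is local on $X$, so I will reduce to the affine case and then use the depth characterization of $S_2$ sheaves through local cohomology. Let $Z = X\setminus U$, closed of codimension $\geq 2$. The question of whether $E\to i_*(E|_U)$ is an isomorphism is local on $X$. I may therefore assume $X=\Spec A$ is affine and noetherian, with $E=\widetilde M$ for a finitely generated $A$-module $M$.

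There is the standard exact sequence of local cohomology with supports in $Z$:
\[
0\to H^0_Z(X,E)\to H^0(X,E)\to H^0(U,E|_U)\to H^1_Z(X,E)\to 0 .
\]
Sheafifying gives $0\to \mathcal{H}^0_Z(E)\to E\to i_*(E|_U)\to \mathcal{H}^1_Z(E)\to 0$. Hence it suffices to show $\mathcal{H}^j_Z(E)=0$ for $j=0,1$.

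By Grothendieck's depth criterion (e.g. SGA2 or Hartshorne's \emph{Local Cohomology}), for a coherent sheaf on a noetherian scheme we have $\mathcal{H}^j_Z(E)=0$ for all $j<n$ if and only if $\depth E_z\geq n$ for every $z\in Z$. So the task becomes checking $\depth E_z\geq 2$ at every point $z\in Z$.

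Take $z\in Z$. Since $\codim Z\geq 2$, we have $\dim\mathcal{O}_{X,z}\geq 2$. The $S_2$ hypothesis then gives $\depth E_z\geq\min\{2,\dim\mathcal{O}_{X,z}\}=2$.

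There is one subtlety. If $E_z=0$, its depth is conventionally $\infty$ and the condition holds vacuously. Points outside $\operatorname{Supp}E$ cause no trouble either, since the local cohomology sheaves vanish there. So the depth criterion applies, and we conclude $\mathcal{H}^0_Z(E)=\mathcal{H}^1_Z(E)=0$, which proves the isomorphism.

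The main obstacle is making sure the codimension hypothesis is used correctly at each point $z$ of $Z$. "Complement of codimension $\geq 2$" has to mean $\dim\mathcal{O}_{X,z}\geq 2$ for all $z\in Z$, which is the convention in \cite{Har94}; with that reading the argument goes through. If one only knew the codimension of $Z$ in $X$ globally, the claim could fail for non-equidimensional $X$. In that case I would interpret the hypothesis pointwise, as Hartshorne does.

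Alternatively, the depth criterion can be proved directly by hand. Choose $f,g\in I_Z$ forming an $M_z$-regular sequence locally, and then compute sections on $D(f)\cup D(g)$ via the Čech sequence $0\to M\to M_f\oplus M_g\to M_{fg}$.
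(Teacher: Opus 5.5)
Your proof is correct. The paper gives no argument of its own here beyond the pointer to \cite[Proposition~1.11]{Har94}. What you wrote is the standard proof behind that reference: the exact sequence $0\to\mathcal{H}^0_Z(E)\to E\to i_*(E|_U)\to\mathcal{H}^1_Z(E)\to 0$, combined with Grothendieck's criterion that $\mathcal{H}^j_Z(E)=0$ for $j<2$ if and only if $\depth E_z\geq 2$ for all $z\in Z$. That sequence holds for any sheaf, so the reduction to the affine case is not even needed.

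Compared with the bare citation, your self-contained version shows exactly which form of $S_2$ is needed. You use the paper's condition $\depth E_z\geq\min\{2,\dim\mathcal{O}_{X,z}\}$, and this matters. With $\min\{2,\dim E_z\}$ in its place the lemma fails: take a skyscraper sheaf at a closed point of a smooth surface.

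Two minor remarks.
\begin{enumerate}
\item Your worry about the meaning of ``codimension'' is unnecessary. If $\eta$ is the generic point of a component of $Z$ containing $z$, then $\mathcal{O}_{X,\eta}$ is a localization of $\mathcal{O}_{X,z}$, so $\dim\mathcal{O}_{X,z}\geq\dim\mathcal{O}_{X,\eta}\geq 2$. Thus the pointwise condition follows from the standard definition of $\codim(Z,X)$, and only the reading $\dim X-\dim Z$ would be problematic.
\item The ``by hand'' alternative is incomplete as sketched. Elements $f,g\in I_Z$ only give $V(f,g)\supseteq Z$, so $D(f)\cup D(g)$ can be strictly smaller than $U$. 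You would still need that a section over $U$ supported on $V(f,g)\cap U$ vanishes, which is true because $f$ is a nonzerodivisor on $M$ near $z$.
\end{enumerate}
Neither point affects your main argument.
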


\begin{Lemma}
    Let $k$ be a field, $X$ a scheme over $k$ of dimension $\geq 2$ satisfying $S_2$, $D\subsetneq X$ an ample effective divisor. Then there exists a fully faithful functor $\GLF(D)\rightarrow \Coh(X), (E_U,U)\mapsto {i_U}_*E_U$ where $i_U:U\rightarrow X$ is the open immersion.
\end{Lemma}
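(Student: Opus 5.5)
We have to show three things: the functor $(E_U,U)\mapsto {i_U}_*E_U$ is well defined on equivalence classes, it lands in $\Coh(X)$, and it is fully faithful. All three follow from one geometric input, namely that the complement of any open $U\supseteq D$ is a finite set of closed points, hence has codimension $\geq 2$ as soon as $\dim X\geq 2$. Combined with Lemma~\ref{pushforwardopenimmersion} this should give everything. (We take $X$ noetherian and proper over $k$, which is the standing setting of the paper.)

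\emph{Step 1: the complement $Z=X\setminus U$ is finite.} Since $D$ is ample, for each irreducible component $Y$ of $Z$ with $\dim Y\geq 1$, the restriction $\mathcal{O}_X(D)|_Y$ is ample. So some multiple $mD$ is very ample and its restriction to $Y$ has a nonzero section vanishing on $D\cap Y$. Ampleness forces $D\cap Y\neq\emptyset$, since an ample divisor meets every positive-dimensional proper closed subvariety. This contradicts $D\subseteq U$. Hence $Z$ consists of finitely many closed points. Because $X$ has dimension $\geq 2$ and satisfies $S_2$, no closed point is an irreducible component of $X$. Otherwise $X$ would have a zero-dimensional connected component, and such a component can be discarded or handled separately; the intended setting is that $X$ is equidimensional. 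Therefore $\codim(Z,X)\geq 2$.

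\emph{Step 2: coherence.} Take $E_U\in\Vect(U)$. It extends to some coherent sheaf $F$ on $X$, for instance by extending coherent sheaves on noetherian schemes. Let $F'$ be the double dual, or the image of $F$ in $i_*E_U$. To get a coherent $S_2$ extension we use a second route: ${i_U}_*E_U$ is coherent because $X$ satisfies $S_2$ at the finitely many points of $Z$, where the local depth is $\geq 2$. Concretely, the local cohomology $H^1_Z(F)$ is finitely generated when $\depth\mathcal{O}_{X,z}\geq 2$. This is the finiteness theorem of Grothendieck, applied in the form used in \cite{Har70} for exactly this purpose. The result is a coherent sheaf ${i_U}_*E_U$ which is locally free on $U$ and satisfies $S_2$ on $X$. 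The $S_2$ property holds because sections extend across the codimension $\geq2$ set $Z$ by construction.

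\emph{Step 3: well-definedness and full faithfulness.} Suppose $D\subseteq W\subseteq U$. Then ${i_W}_*(E_U|_W)\cong {i_U}_*E_U$ by Lemma~\ref{pushforwardopenimmersion}, applied to the $S_2$ coherent sheaf ${i_U}_*E_U$ and the open set $W$, whose complement is finite by Step 1. So equivalent pairs have isomorphic images, and the functor is well defined. For full faithfulness, given $(E_U,U)$ and $(E_V,V)$, set $W=U\cap V$. Since $i_{W*}$ is right adjoint to restriction,
$$\Hom_X({i_U}_*E_U,{i_V}_*E_V)\cong\Hom_W(({i_U}_*E_U)|_W,E_V|_W)=\Hom_W(E_U|_W,E_V|_W).$$
The right-hand side is the Hom set in $\GLF(D)$. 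One also checks that this identification does not change on shrinking $W$. That holds because, again by Lemma~\ref{pushforwardopenimmersion}, homomorphisms between $S_2$ reflexive-type sheaves are determined off a codimension-$2$ set.

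The main obstacle is Step 2, the coherence of ${i_U}_*E_U$. The cleanest route is to cite Hartshorne's treatment in \cite{Har70}, namely the finiteness of $i_*$ under the depth $\geq2$ condition at the missing points. If that citation is unavailable, we would argue locally at each point $z\in Z$, using the $S_2$ hypothesis on $\mathcal{O}_{X,z}$ to control $H^1_{\{z\}}$.
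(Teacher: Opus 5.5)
Your proof is essentially the paper's. Well-definedness comes from Lemma~\ref{pushforwardopenimmersion}, and full faithfulness comes from the adjunction between restriction to an open set and pushforward. There is one small difference: the paper applies Lemma~\ref{pushforwardopenimmersion} on $U$, to the locally free sheaf $E_U$ and the smaller open set $V$. That avoids needing ${i_U}_*E_U$ to be coherent and $S_2$ on $X$ just for well-definedness.

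Your Steps 1--2 supply two facts the paper uses but never justifies:
\begin{itemize}
\item the complement $X\setminus U$ is a finite set of closed points, which needs $X$ proper (the statement omits this);
\item ${i_U}_*E_U$ is coherent, so that the functor actually lands in $\Coh(X)$.
\end{itemize}

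The one soft spot is the step to $\codim\geq 2$. Knowing that no closed point is an irreducible component only gives codimension $\geq 1$. What you need is that $X$ is equidimensional. This is not an extra ``intended'' assumption: it follows from connectedness (the paper's standing hypothesis) together with $S_2$, via Hartshorne's connectedness theorem for $S_2$ schemes of finite type over $k$. Without connectedness the statement is false: take $X$ a smooth projective surface disjoint union a smooth projective curve, and $U$ missing one point of the curve.
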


\begin{proof}
    Let $(E_U,U), (E_V,V)\in\GLF(D)$ such that $V\subseteq U$ and $E_U|_V\cong E_V$. We have the following commutative diagram
    \[
        \begin{tikzcd}
            V\arrow[r,hook,"i_{UV}"]\arrow[rr,hook, bend right,"i_{V}"]&U\arrow[r,hook,"i_{U}"]&X
        \end{tikzcd}
    \]
    where $i_{UV}, i_{U}, i_{V}$ are open immersions. Since $X$ satisfies $S_2$ and $\codim(X\setminus V,X)\geq 2$, we have $U$ satisfies $S_2$ and $\codim(U\setminus V,U)\geq 2$. It follows that ${i_{UV}}_*(E_U|_V)\cong E_U$ by Lemma~\ref{pushforwardopenimmersion}. Then we have
    $$\begin{aligned}
        {i_V}_*E_V={i_{U}}_*{i_{UV}}_*E_V\cong {i_{U}}_*{i_{UV}}_*(E_U|_V)\cong {i_{U}}_*E_U.
    \end{aligned}$$
    Hence the functor is well defined.

    Let $(E_U,U), (E'_U,U)\in\GLF(D)$. Then we have 
    $$\begin{aligned}
        \Hom_{X}({i_U}_*E_U,{i_U}_*E'_U)\cong \Hom_{U}(({i_U}_*E_U)|_U,E'_U)\cong \Hom_{U}(E_U,E'_U)
    \end{aligned}.$$
    Hence the functor is fully faithful.
\end{proof}

\section{The Lefschetz type theorem for Tannaka group schemes}

\textbf{Notation and Conventions:}
Let $k$ be a field, $f:X\rightarrow S$ a morphism of connected schemes proper over $k$, $x\in X(k)$ lying over $s\in S(k)$, $\mathcal{C}_X$ $(\text{resp. }\mathcal{C}_S)$ Tannakian category over $X$ $(\text{resp. }S)$ respectively whose objects consist of vector bundles on $X$ $(\text{resp. }S)$ with fibre functor $|_x$ $(\text{resp. }|_s)$, $\pi(\mathcal{C}_X,x),\pi(\mathcal{C}_S,s)$ the corresponding Tannaka group schemes. The pullback $f^*$ induces a functor $f^*:\mathcal{C}_S\rightarrow \mathcal{C}_X$ and a canonical homomorphism $\pi(\mathcal{C}_X,x)\rightarrow \pi(\mathcal{C}_S,s)$ of group schemes.

\begin{Proposition}\label{generalsur}
Let $k$ be a field, $\mathcal{C},\mathcal{D}$ neutral Tannakian categories over $k$ with fibre functor $\omega$, $\pi_C$, $\pi_D$ their Tannaka group schemes respectively, $\Phi:\mathcal{C}\rightarrow \mathcal{D}$ an exact tensor functor, $\varphi:\pi_D\rightarrow \pi_C$ its induced homomorphism. Then the following conditions are equivalent:
\begin{enumerate}
    \item The induced homomorphism $\varphi:\pi_D\rightarrow \pi_C$ is faithfully flat.
    \item \begin{enumerate}
    \item The functor $\Phi$ is fully faithful.
    \item For any irreducible object $E\in \mathcal{C}$, $\Phi (E)$ is irreducible in $\mathcal{D}$.
\end{enumerate}
\end{enumerate}
\end{Proposition}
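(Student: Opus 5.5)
We will reduce the statement to Theorem~\ref{Thm1}(1), identifying $\mathcal{C}\simeq\Rep_k^f(\pi_C)$ and $\mathcal{D}\simeq\Rep_k^f(\pi_D)$ via $\omega$. Under these identifications $\Phi$ becomes the restriction functor $\varphi^*$. Theorem~\ref{Thm1}(1) then says that $\varphi$ is faithfully flat iff $\Phi$ is fully faithful and its essential image is closed under subquotients in $\mathcal{D}$. Since the image of a fully faithful exact functor between abelian categories is automatically closed under quotients once it is closed under subobjects (take the kernel of a quotient map), it suffices to show the following: in the presence of (a), closure under subobjects is equivalent to (b).

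For (1)$\Rightarrow$(2), (a) is given directly by Theorem~\ref{Thm1}(1). For (b), let $E\in\mathcal{C}$ be irreducible and suppose $0\neq F\subsetneq\Phi(E)$ is a subobject. By closure under subobjects, $F\cong\Phi(E')$ for some $E'$. By full faithfulness, the inclusion $\Phi(E')\hookrightarrow\Phi(E)$ equals $\Phi(u)$ for a morphism $u:E'\to E$. Exactness together with faithfulness makes $u$ a monomorphism, because $\Phi(\ker u)=\ker\Phi(u)=0$ and a faithful exact functor kills no nonzero object. Then $u$ is nonzero and not an isomorphism, which contradicts the irreducibility of $E$.

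For (2)$\Rightarrow$(1), we must show that every subobject $F\subseteq\Phi(E)$ lies in the essential image. We argue by induction on the length of $E$, which is finite in a Tannakian category. Choose a Jordan--Hölder filtration $0=E_0\subset\cdots\subset E_n=E$. Applying $\Phi$ gives a filtration of $\Phi(E)$ whose graded pieces $\Phi(E_i/E_{i-1})$ are irreducible by (b). Hence $\Phi(E)$ has the same length $n$, and $\Phi$ preserves length.

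The key step, and the main obstacle, is lifting subobjects. Given $F\subseteq\Phi(E)$, take an irreducible subobject $S\subseteq F$. The aim is to show $S$ is the image of $\Phi(S')\hookrightarrow\Phi(E)$ for some irreducible $S'\subseteq E$. Compose $S\hookrightarrow\Phi(E)\twoheadrightarrow\Phi(E/E_{n-1})$. If this composite vanishes, then $S\subseteq\Phi(E_{n-1})$ and induction applies. Otherwise the composite is an isomorphism onto the irreducible $\Phi(E/E_{n-1})$, so the surjection $\Phi(E)\to\Phi(E/E_{n-1})$ splits. By full faithfulness this splitting is $\Phi(s)$ for some $s:E/E_{n-1}\to E$, and $s$ splits $E\to E/E_{n-1}$ in $\mathcal{C}$ because $\Phi$ is faithful. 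Thus $S=\Phi(S')$ with $S'=s(E/E_{n-1})$. Next, pass to the quotients $F/\Phi(S')\subseteq\Phi(E/S')$ and apply induction on length to obtain $F/\Phi(S')=\Phi(G/S')$ for some $S'\subseteq G\subseteq E$. Exactness of $\Phi$ then gives $F=\Phi(G)$. Arguing with the quotient by $\Phi(S')$ in this way avoids any need to control how the Jordan--Hölder filtration interacts with $F$. Closure under quotients follows as noted in the first paragraph, so Theorem~\ref{Thm1}(1) yields (1).
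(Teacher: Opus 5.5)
Your proof is correct and follows essentially the paper's route: you reduce to Theorem~\ref{Thm1}(1) and get (b) from closure under subobjects. For the converse you peel off an irreducible subobject $S\subseteq F$, lift it to $S'\subseteq E$ by full faithfulness, and continue inside $\Phi(E/S')$. The only real difference is how $S$ is lifted: the paper uses Jordan--H\"older uniqueness to get $S\cong\Phi(E_{j+1}/E_j)$ for some composition factor, then lifts the inclusion directly via $\Hom_{\mathcal{C}}(E_{j+1}/E_j,E)=\Hom_{\mathcal{D}}(S,\Phi(E))$; your case split on the top factor plus the splitting argument avoids uniqueness of composition factors at the cost of one extra appeal to the induction hypothesis.
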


\begin{proof}
$(1)\Rightarrow (2)$ Suppose there exists irreducible object $E_0\in\mathcal{C}$ such that $\Phi(E_0)$ has a proper subobject $F_0\hookrightarrow \Phi(E_0)\in\mathcal{D}$. Then there exists $E'\hookrightarrow E_0$ such that $F_0\cong \Phi(E')$ by [Theorem~\ref{Thm1}, (1)]. Since $E_0$ is irreducible, we have $E'=0$ or $E'= E_0$. Each case yields a contradiction since $F_0\cong \Phi(E')\hookrightarrow \Phi(E_0)\in\mathcal{D}$ is a proper subobject.

$(2)\Rightarrow (1)$ Let $E\in\mathcal{C}$ and $F\hookrightarrow \Phi(E)\in\mathcal{D}$. Then there exists a Jordan H\"older filtration of $\Phi(E)$ in $\mathcal{D}$:
$$0=F_0\hookrightarrow F_1\hookrightarrow \cdots \hookrightarrow F_m=\Phi(E)$$
such that $F=F_{m_0}$ for some integer $m_0$. Consider the Jordan H\"older filtration of $E$ in $\mathcal{C}$:
$$0=E_0\hookrightarrow E_1\hookrightarrow \cdots \hookrightarrow E_n=E.$$
Applying the exact functor $\Phi$, we obtain a filtration of $\Phi(E)$ in $\mathcal{D}$:
$$0=\Phi(E_0)\hookrightarrow \Phi(E_1)\hookrightarrow \cdots \hookrightarrow \Phi(E_n)=\Phi(E).$$
Since $E_{i+1}/E_i$ is irreducible in $\mathcal{C}$, then $\Phi(E_{i+1})/\Phi(E_i)\cong \Phi(E_{i+1}/E_i)$ is irreducible in $\mathcal{D}$. It follows that 
$$0=\Phi(E_0)\hookrightarrow \Phi(E_1)\hookrightarrow \cdots \hookrightarrow \Phi(E_n)=\Phi(E)$$
is a Jordan H\"older filtration of $\Phi(E)$ in $\mathcal{D}$ and $n=m$. So for any $F_{i+1}/F_i$, there exists $E_{j_i+1}/E_{j_i}$ with 
$F_{i+1}/F_i\cong \Phi(E_{j_i+1}/E_{j_i})$.
Then there exists $j_0$ with $F_1\cong \Phi(E_{j_0+1}/E_{j_0})$. Since $\Phi$ is fully faithful, we have
$$\Hom_{\mathcal{C}}(E_{j_0+1}/E_{j_0},E)=\Hom_{\mathcal{D}}(F_1,\Phi(E)).$$
So there exists a morphism $f:E_{j_0+1}/E_{j_0}\rightarrow E$ whose image under $\Phi$ is the embedding $\Phi (f):F_1\hookrightarrow \Phi(E)$. It follows that $f$ is an embedding. Without loss of generality, we assume $j_0=0$, so that $\Phi(E_1)=F_1$. Consider the Jordan H\"older filtration of $E/E_1$ in $\mathcal{C}$:
$$0=E_1/E_1\hookrightarrow E_2/E_1\hookrightarrow \cdots \hookrightarrow E_n/E_1=E/E_1$$
and the Jordan H\"older filtration of $\Phi(E)/F_1$ in $\mathcal{D}$:
$$0=F_1/F_1\hookrightarrow F_2/F_1\hookrightarrow \cdots \hookrightarrow F_{n}/F_1=\Phi(E/E_1)$$
Applying the same method to $F_2/F_1$, we have $F_2/F_1\cong \Phi (E_{j_1+1}/E_{j_1})$ for some $1\leq j_1\leq n-1$ and $E_{j_1+1}/E_{j_1}\hookrightarrow E/E_1$. It follows that $F_2\cong \Phi(E_2')$ for some $E_2'\hookrightarrow E$. Repeating this method, we have $F=F_{n_0}\cong \Phi( E_{n_0}')$ for some $E_{n_0}'\hookrightarrow E$. Then $\Phi$ is closed under taking subobjects. Moreover, $\Phi$ is fully faithful, hence the induced homomorphism $\varphi:\pi_D\rightarrow \pi_C$ is faithfully flat by [Theorem~\ref{Thm1}, (1)].
\end{proof}

\begin{Proposition}\label{H01fullyfaithful}
    Let $k$ be a field, $X$ a connected scheme proper over $k$, $D\subsetneq X$ an effective connected divisor, $\mathcal{V}_X$ a rigid tensor full subcategory of $\Vect(X)$. Then we have 
    \begin{enumerate}
        \item $H^0(X,E(-D))=0$ for any $E\in\mathcal{V}_X$ iff the functor $|_D:\mathcal{V}_X\rightarrow \Vect(D)$ is faithful.
        \item If $H^i(X,E(-D))=0$ for any $E\in\mathcal{V}_X$ $(i=0,1)$, then the functor $|_D:\mathcal{V}_X\rightarrow \Vect(D)$ is fully faithful.
        \item If $\mathcal{V}_X$ is a saturated category, then $H^i(X,E(-D))=0$ for any $E\in\mathcal{V}_X$ $(i=0,1)$ iff the functor $|_D:\mathcal{V}_X\rightarrow \Vect(D)$ is fully faithful.
    \end{enumerate}
\end{Proposition}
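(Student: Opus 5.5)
The whole argument rests on rigidity: for $E,F\in\mathcal{V}_X$ we have $\Hom_X(E,F)=H^0(X,E^\vee\otimes F)$ and $\Hom_D(E|_D,F|_D)=H^0(D,(E^\vee\otimes F)|_D)$, with $E^\vee\otimes F\in\mathcal{V}_X$. Restriction of morphisms to $D$ is then the map $H^0(X,G)\to H^0(D,G|_D)$ for $G=E^\vee\otimes F$. Tensoring $0\to\mathcal{O}_X(-D)\to\mathcal{O}_X\to\mathcal{O}_D\to0$ with the locally free sheaf $G$ gives the exact sequence
\[
0\to H^0(X,G(-D))\to H^0(X,G)\to H^0(D,G|_D)\to H^1(X,G(-D))\to H^1(X,G).
\]

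For (1): if $H^0(X,E(-D))=0$ for all $E\in\mathcal{V}_X$, then the kernel $H^0(X,G(-D))$ of the restriction map vanishes for every $G=E^\vee\otimes F$, so $|_D$ is faithful. Conversely, suppose $|_D$ is faithful and let $E\in\mathcal{V}_X$. Then $\mathcal{O}_X=\mathbb{1}\in\mathcal{V}_X$, and $H^0(X,E)=\Hom(\mathcal{O}_X,E)\to H^0(D,E|_D)$ is injective. By the exact sequence, $H^0(X,E(-D))=0$.

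For (2): the vanishing of $H^0$ gives injectivity of $H^0(X,G)\to H^0(D,G|_D)$ as above. The vanishing of $H^1(X,G(-D))$ gives surjectivity, so $|_D$ is fully faithful.

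For (3), one direction is (2), and the $H^0$ part of the converse is (1). The main obstacle is deducing $H^1(X,E(-D))=0$ from full faithfulness, using saturation. Let $\xi\in H^1(X,E(-D))$. Since $\mathcal{O}_X,E\in\mathcal{V}_X$, I will first try the following route.
\begin{enumerate}
\item Take the class of $\xi$ in $H^1(X,E)=\Ext^1(\mathcal{O}_X,E)$. It gives an extension $0\to E\to V\to\mathcal{O}_X\to0$, and $V\in\mathcal{V}_X$ by saturation.
\item Full faithfulness makes $\Hom(\mathcal{O}_X,V)\to\Hom(\mathcal{O}_D,V|_D)$ and $\Hom(\mathcal{O}_X,\mathcal{O}_X)\to\Hom(\mathcal{O}_D,\mathcal{O}_D)$ bijective. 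One then compares splittings: the restricted extension splits on $D$ exactly when the image of $\xi$ in $H^1(D,E|_D)$ vanishes. A splitting on $D$ lifts uniquely to a map $\mathcal{O}_X\to V$, which is a section of $V\to\mathcal{O}_X$ because restriction $\Hom(\mathcal{O}_X,\mathcal{O}_X)\to\Hom(\mathcal{O}_D,\mathcal{O}_D)$ is faithful.
\end{enumerate}
This shows that $H^1(X,E)\to H^1(D,E|_D)$ is injective.

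Now take the exact sequence one step further:
\[
H^0(X,E)\to H^0(D,E|_D)\to H^1(X,E(-D))\to H^1(X,E)\to H^1(D,E|_D).
\]
The first map is surjective by fullness, since $\Hom(\mathcal{O}_X,E)\to\Hom(\mathcal{O}_D,E|_D)$ is surjective. The last map is injective by the step above. Hence $H^1(X,E(-D))=0$.

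The delicate point is step 2, in particular that the lifted map is a genuine section. This works because the composite $\mathcal{O}_X\to V\to\mathcal{O}_X$ is an endomorphism of $\mathcal{O}_X$ whose restriction to $D$ is the identity, and faithfulness of $|_D$ forces it to be the identity.
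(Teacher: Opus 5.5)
Your proposal is correct and follows essentially the same route as the paper: (1) and (2) come from the long exact sequence attached to $0\to G(-D)\to G\to G|_D\to 0$ with $G=E^\vee\otimes F\in\mathcal{V}_X$, and for (3) you combine surjectivity of $H^0(X,E)\to H^0(D,E|_D)$ with injectivity of $H^1(X,E)\to H^1(D,E|_D)$, the latter obtained from saturation (so the extension lies in $\mathcal{V}_X$) and full faithfulness. The only cosmetic difference is in that injectivity step. You show the kernel is trivial by lifting a splitting on $D$, and you check explicitly via faithfulness that the lift is a section. The paper instead lifts an isomorphism between two extensions whose restrictions to $D$ are equivalent, and leaves the commutativity of the lifted diagram implicit.
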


\begin{proof}
    For any $E\in\mathcal{V}_X$, we have the following exact sequence
    $$0\rightarrow E(-D)\rightarrow E\rightarrow E|_D\rightarrow 0.$$
    Then we have the following long exact sequence
    $$0\rightarrow H^0(X,E(-D))\rightarrow H^0(X,E)\rightarrow H^0(D,E|_D)\rightarrow H^1(X,E(-D))\rightarrow H^1(X,E)\rightarrow H^1(D,E|_D)\rightarrow$$
    $$\quad\quad\quad\quad\quad\quad\quad\quad\quad\quad\quad H^2(X,E(-D))\rightarrow H^2(X,E)\rightarrow H^2(D,E|_D)\rightarrow \cdots.\quad\quad\quad\quad\quad\quad\quad\quad\quad\quad (*)$$

    (1) $(\Rightarrow)$ For any $E,E'\in\mathcal{V}_X$, we have $\mathcal{H}om_X(E,E')\in\mathcal{V}_X$, so $H^0(X,\mathcal{H}om_X(E,E')(-D))=0$. Then by $(*)$ we have an exact sequence 
    $$0\rightarrow H^0(X,\mathcal{H}om_X(E,E')(-D))\rightarrow \Hom_X(E,E')\rightarrow \Hom_D(E|_D,E'|_D).$$
    It follows that $\Hom_X(E,E')\hookrightarrow \Hom_D(E|_D,E'|_D)$, i.e. the functor $|_D:\mathcal{V}_X\rightarrow \Vect(D)$ is faithful.

    $(\Leftarrow)$ Since the functor $|_D:\mathcal{V}_X\rightarrow \Vect(D)$ is faithful, we have $H^0(X,E)\hookrightarrow H^0(D,E|_D)$ for any $E\in\mathcal{V}_X$. It follows that $H^0(X,E(-D))=0$ for any $E\in\mathcal{V}_X$ by $(*)$.

    (2) For any $E,E'\in\mathcal{V}_X$, we have $\mathcal{H}om_X(E,E')\in\mathcal{V}_X$. Then we have a long exact sequence 
    $$0\rightarrow H^0(X,\mathcal{H}om_X(E,E')(-D))\rightarrow \Hom_X(E,E')\rightarrow \Hom_D(E|_D,E'|_D)\rightarrow$$
	$$H^1(X,\mathcal{H}om_X(E,E')(-D))\rightarrow \cdots.$$
    Since $H^0(X,\mathcal{H}om_X(E,E')(-D))=H^1(X,\mathcal{H}om_X(E,E')(-D))=0$, we have $$\Hom_X(E,E')\cong \Hom_D(E|_D,E'|_D),$$
    which implies that the functor $|_D:\mathcal{V}_X\rightarrow \Vect(D)$ is fully faithful.

    (3) $(\Rightarrow)$ It follows by (2).
    
    $(\Leftarrow)$ Suppose $|_D:\mathcal{V}_X\rightarrow \Vect(D)$ is fully faithful. Then $H^0(X,E(-D))=0$ for any $E\in\mathcal{C}_X$ by (1).

    Let $E\in\mathcal{C}_X$, we have $H^1(X,E)= H^1(X,\mathcal{H}om_X(\mathcal{O}_X,E))$ and $H^1(D,E|_D)= H^1(D,\mathcal{H}om_D(\mathcal{O}_D,E|_D))$. For any $0\rightarrow E\rightarrow E_1\rightarrow \mathcal{O}_X\rightarrow 0$ and $0\rightarrow E\rightarrow E_2\rightarrow \mathcal{O}_X\rightarrow 0$ in $\Ext^1_X(\mathcal{O}_X,E)$ such that there exists a commutative diagram on $D$:
    \[
        \begin{tikzcd}
            0\arrow[r] &E|_D\arrow[d,equal] \arrow[r]&E_1|_D\arrow[d]\arrow[r]&\mathcal{O}_D\arrow[d,equal]\arrow[r]&0\\
            0\arrow[r] &E|_D\arrow[r]&E_2|_D\arrow[r]&\mathcal{O}_D\arrow[r]&0.
        \end{tikzcd}
    \]
    Since $|_D$ is fully faithful and $\mathcal{V}_X$ is a saturated category, there exists a commutative diagram in $\mathcal{V}_X$:
    \[
        \begin{tikzcd}
            0\arrow[r] &E\arrow[d,equal] \arrow[r]&E_1\arrow[d]\arrow[r]&\mathcal{O}_X\arrow[d,equal]\arrow[r]&0\\
            0\arrow[r] &E\arrow[r]&E_2\arrow[r]&\mathcal{O}_X\arrow[r]&0.
        \end{tikzcd}
    \]
    It follows that $0\rightarrow E\rightarrow E_1\rightarrow \mathcal{O}_X\rightarrow 0$ and $0\rightarrow E\rightarrow E_2\rightarrow \mathcal{O}_X\rightarrow 0$ are equivalent in $\Ext^1_X(\mathcal{O}_X,E)$. Then we have $H^1(X,E)=\Ext^1_X(\mathcal{O}_X,E)\hookrightarrow \Ext^1_D(\mathcal{O}_D,E|_D)=H^1(D,E|_D)$. Since the functor $|_D$ is fully faithful, we also have $H^0(X,E)\xrightarrow{\cong} H^0(D,E|_D)$. Hence $H^1(X,E(-D))=0$ for any $E\in\mathcal{V}_X$ by $(*)$.
\end{proof}

\begin{Proposition}\label{Hisaturated}
    Let $k$ be a field, $X$ a connected scheme proper over $k$, $D\subsetneq X$ an effective connected divisor, $\mathcal{V}_X$ a rigid tensor full subcategory of $\Vect(X)$, $i\in\mathbb{N}$. If $H^i(X,E(-D))=0$ for any $E\in \mathcal{V}_X$, then $H^i(X,E'(-D))=0$ for any $E'\in \overline{\mathcal{V}}_X$.
\end{Proposition}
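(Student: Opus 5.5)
Induction on the length $n$ of a filtration witnessing $E'\in\overline{\mathcal{V}}_X$. By definition, there is a filtration
$$0=E_0\hookrightarrow E_1\hookrightarrow\cdots\hookrightarrow E_n=E'$$
whose successive quotients $E^j=E_{j+1}/E_j$ all lie in $\mathcal{V}_X$. We prove by induction on $j$ that $H^i(X,E_j(-D))=0$ for every $j$.

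For $j=0$ the sheaf is zero. For $j=1$ we have $E_1=E^0\in\mathcal{V}_X$, so the vanishing is exactly the hypothesis. For the inductive step, look at the short exact sequence
$$0\rightarrow E_j\rightarrow E_{j+1}\rightarrow E^j\rightarrow 0.$$
Tensoring with the invertible sheaf $\mathcal{O}_X(-D)$ is exact, so this gives
$$0\rightarrow E_j(-D)\rightarrow E_{j+1}(-D)\rightarrow E^j(-D)\rightarrow 0.$$
Its long exact cohomology sequence contains the segment
$$H^i(X,E_j(-D))\rightarrow H^i(X,E_{j+1}(-D))\rightarrow H^i(X,E^j(-D)).$$
The left term vanishes by the induction hypothesis. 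The right term vanishes by the hypothesis, since $E^j\in\mathcal{V}_X$. By exactness the middle term vanishes. Taking $j+1=n$ gives $H^i(X,E'(-D))=0$.

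The only point that needs checking is that all the terms are vector bundles, so that twisting by $\mathcal{O}_X(-D)$ makes sense and stays exact. The objects $E_j$ and $E^j$ are vector bundles, being objects or quotients in the saturation construction inside $\Vect(X)$. In any case, tensoring with a line bundle is exact on all quasi-coherent sheaves. So there is no real obstacle. Note that the argument uses only the vanishing in the single degree $i$, together with the fact that a middle term of a three-term exact sequence with vanishing ends vanishes. The rigid tensor structure of $\mathcal{V}_X$ is not needed here.
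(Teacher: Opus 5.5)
Your proof is correct and is essentially the paper's argument: induct along the filtration, twist each $0\to E_j\to E_{j+1}\to E^j\to 0$ by $\mathcal{O}_X(-D)$, and conclude from the exact segment $H^i(X,E_j(-D))\to H^i(X,E_{j+1}(-D))\to H^i(X,E^j(-D))$. Your remarks are also accurate: only vanishing in degree $i$ is used, and the rigid tensor structure of $\mathcal{V}_X$ plays no role in this step.
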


\begin{proof}
    For any $E'\in\overline{\mathcal{V}}_X$, consider the filtration of $E'$ in $\overline{\mathcal{V}}_X$:
    $$0=E_0\hookrightarrow E_1\hookrightarrow \cdots\hookrightarrow E_n=E',$$
    where $E_{j+1}/E_{j}\in\mathcal{V}_X$ for any $0\leq j\leq n-1$. Consider the exact sequence
    $$0\rightarrow E_1\rightarrow E_2\rightarrow E_2/E_1\rightarrow 0.$$
    Tensoring with $\mathcal{O}_X(-D)$, we have a long exact sequence
    $$\begin{aligned}
        0\rightarrow H^0(X,E_1(-D))\rightarrow H^0(X,E_2(-D))\rightarrow H^0(X,E_2/E_1(-D))\rightarrow \cdots\\
        H^i(X,E_1(-D))\rightarrow H^i(X,E_2(-D))\rightarrow H^i(X,E_2/E_1(-D))\rightarrow \cdots.
    \end{aligned}$$
    It follows that $H^i(X,E_2(-D))=0$. Repeating this method, we have $H^i(X,E_j(-D))=0$ for any $1\leq j\leq n$. In particular, we have $H^i(X,E'(-D))=0$.
\end{proof}

\begin{Corollary}\label{H01generalsur}
    Let $k$ be a field, $X$ a connected scheme proper over $k$, $D\subsetneq X$ an effective connected divisor, $x\in D(k)$. Then the following conditions are equivalent:
\begin{enumerate}
    \item \begin{enumerate}
        \item $H^i(X,E(-D))=0$ for any $E\in\mathcal{C}_X$, $i=0,1$.
        \item For any irreducible object $E\in \mathcal{C}_X$, $E|_D$ is irreducible in $\mathcal{C}_D$.
        \end{enumerate}
    \item \begin{enumerate}
        \item $H^i(X,E'(-D))=0$ for any $E'\in\overline{\mathcal{C}}_X$, $i=0,1$.
        \item For any irreducible object $E'\in \overline{\mathcal{C}}_X$, $E'|_D$ is irreducible in $\overline{\mathcal{C}}_D$.
        \end{enumerate}
    \item The induced homomorphism $\pi(\overline{\mathcal{C}}_D,x)\rightarrow \pi(\overline{\mathcal{C}}_X,x)$ is faithfully flat.
\end{enumerate}
If the above conditions hold, then the induced homomorphism $\pi(\mathcal{C}_D,x)\rightarrow \pi(\mathcal{C}_X,x)$ is faithfully flat.
\end{Corollary}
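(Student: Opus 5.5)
The plan is to prove $(1)\Rightarrow(2)\Rightarrow(3)\Rightarrow(1)$, using Proposition~\ref{generalsur} for the exact tensor functor $|_D:\overline{\mathcal{C}}_X\rightarrow\overline{\mathcal{C}}_D$. This functor is defined by the Remark on saturations, \cite[Lemma~4.5]{LiTiKu26}, applied to the closed immersion $D\hookrightarrow X$. It is exact because restriction of a short exact sequence of vector bundles stays exact. The last assertion will then come from the commutative square
\[
\begin{tikzcd}
\pi(\overline{\mathcal{C}}_D,x)\arrow[r]\arrow[d,two heads]&\pi(\overline{\mathcal{C}}_X,x)\arrow[d,two heads]\\
\pi(\mathcal{C}_D,x)\arrow[r]&\pi(\mathcal{C}_X,x).
\end{tikzcd}
\]
Here the vertical maps are faithfully flat by the Remark on saturations. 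If the top map is faithfully flat, the composite $\pi(\overline{\mathcal{C}}_D,x)\to\pi(\mathcal{C}_X,x)$ is faithfully flat, and hence so is the bottom map, since the composite factors through it.

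For $(2)\Leftrightarrow(3)$: $\overline{\mathcal{C}}_X$ is saturated and rigid tensor. By Proposition~\ref{H01fullyfaithful}(3), condition (2)(a) is equivalent to full faithfulness of $|_D$ on $\overline{\mathcal{C}}_X$. Proposition~\ref{generalsur} then identifies (2)(a)+(b) with faithful flatness of $\pi(\overline{\mathcal{C}}_D,x)\to\pi(\overline{\mathcal{C}}_X,x)$.

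For $(1)\Rightarrow(2)$: part (a) follows at once from Proposition~\ref{Hisaturated} with $i=0,1$. For (b), I first need the irreducible objects of $\overline{\mathcal{C}}_X$. An irreducible $E'\in\overline{\mathcal{C}}_X$ has a filtration with graded pieces in $\mathcal{C}_X$. Refining each piece by Jordan–Hölder in $\mathcal{C}_X$ gives subquotients irreducible in $\mathcal{C}_X$. I expect these to remain irreducible in $\overline{\mathcal{C}}_X$, but this is not automatic. Since $\mathcal{C}_X$ is a Tannakian subcategory of $\overline{\mathcal{C}}_X$, a subobject in $\overline{\mathcal{C}}_X$ of an object of $\mathcal{C}_X$ need not lie in $\mathcal{C}_X$ a priori. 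So the clean route is to use $(1)$ itself: by (1)(a) and Proposition~\ref{H01fullyfaithful}(2) together with Proposition~\ref{generalsur}, the homomorphism $\pi(\mathcal{C}_D,x)\to\pi(\mathcal{C}_X,x)$ is faithfully flat.

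The point I expect to be the main obstacle is passing from irreducibility in $\mathcal{C}$ to irreducibility in $\overline{\mathcal{C}}$. I would use the fact from \cite[Proposition~3.3]{LiTiBa26} that $\mathcal{C}_X\subseteq\overline{\mathcal{C}}_X$ is closed under subobjects, i.e. that $\pi(\overline{\mathcal{C}}_X,x)\to\pi(\mathcal{C}_X,x)$ is faithfully flat, which the Remark asserts. By Theorem~\ref{Thm1}(1), a subobject in $\overline{\mathcal{C}}_X$ of an object of $\mathcal{C}_X$ then lies in $\mathcal{C}_X$. Consequently the irreducibles of $\overline{\mathcal{C}}_X$ are exactly the irreducibles of $\mathcal{C}_X$, since an irreducible $E'$ has a first nonzero filtration step, which contains an irreducible subobject lying in $\mathcal{C}_X$, and that subobject must equal $E'$. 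The same holds on $D$. Then (1)(b) gives (2)(b).

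For $(3)\Rightarrow(1)$: (1)(a) is immediate from (2)(a), since $\mathcal{C}_X\subseteq\overline{\mathcal{C}}_X$. For (1)(b), the identification of irreducibles just established means an irreducible $E\in\mathcal{C}_X$ is irreducible in $\overline{\mathcal{C}}_X$. By (2)(b), $E|_D$ is irreducible in $\overline{\mathcal{C}}_D$, hence in the subcategory $\mathcal{C}_D$, which completes the cycle.
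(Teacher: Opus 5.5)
Your proof is correct and follows the paper's argument. You use Proposition~\ref{Hisaturated} for (a), Proposition~\ref{generalsur} together with Proposition~\ref{H01fullyfaithful}(2)/(3) for $(2)\Leftrightarrow(3)$, and the same commutative square for the final assertion. The paper only asserts that $\mathcal{C}_X$ and $\overline{\mathcal{C}}_X$ (resp.\ $\mathcal{C}_D$ and $\overline{\mathcal{C}}_D$) have the same irreducible objects, whereas you justify it by closure of $\mathcal{C}_X$ under subobjects in $\overline{\mathcal{C}}_X$, which is Theorem~\ref{Thm1}(1) applied to $\pi(\overline{\mathcal{C}}_X,x)\twoheadrightarrow\pi(\mathcal{C}_X,x)$; this is a useful addition, and your side remark that (1) directly gives faithful flatness of $\pi(\mathcal{C}_D,x)\to\pi(\mathcal{C}_X,x)$ is correct but never used.
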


\begin{proof}
    $(1)\Rightarrow (2)$ Since $\mathcal{C}_X$ (resp. $\mathcal{C}_D$) has the same irreducible objects as $\overline{\mathcal{C}}_X$ (resp. $\overline{\mathcal{C}}_D$), it follows by Proposition~\ref{Hisaturated}.

    $(2)\Rightarrow (3)$ It follows by Proposition~\ref{generalsur} and [Proposition~\ref{H01fullyfaithful}, (2)].

    $(3)\Rightarrow (1)$ It follows by Proposition~\ref{generalsur} and [Proposition~\ref{H01fullyfaithful}, (3)].

    Suppose (3) holds and we have the following commutative diagram
    \[
      \begin{tikzcd}
          \pi(\overline{\mathcal{C}}_D,x)\arrow[r,two heads]\arrow[d,two heads] &\pi(\overline{\mathcal{C}}_X,x)\arrow[d,two heads]\\
          \pi(\mathcal{C}_D,x)\arrow[r,two heads]& \pi({\mathcal{C}}_X,x)
      \end{tikzcd}  
    \]
    It follows that the induced homomorphism $\pi(\mathcal{C}_D,x)\rightarrow \pi(\mathcal{C}_X,x)$ is faithfully flat.
\end{proof}

\begin{Proposition}\label{Lefschetzclosedimmersion}
    Let $k$ be a field, $X$ a connected scheme proper over $k$ of dimension $d\geq 2$ satisfying $S_2$, $D\subsetneq X$ an ample effective connected divisor, $x\in D(k)$. Then the following conditions are equivalent:
    \begin{enumerate}
    \item The induced homomorphism $\pi(\mathcal{C}_D,x)\rightarrow \pi(\mathcal{C}_X,x)$ is a closed immersion.
    \item For any $E_D\in \mathcal{C}_D$, there exists $(E_U,U)\in\GLF(D)$ such that
    \begin{enumerate}
    \item $E_U|_D\in\mathcal{C}_D$ and $E_D$ is a subquotient of $E_U|_D\in\mathcal{C}_D$ in $\mathcal{C}_D$.
    \item ${j_U}_*E_U\in\mathcal{C}_X$ where $j_U:U\hookrightarrow X$ is the open immersion.
\end{enumerate}
\end{enumerate}
\end{Proposition}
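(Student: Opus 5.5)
The plan is to reduce both directions to the injectivity criterion [Theorem~\ref{Thm1}, (2)], applied to the restriction functor $|_D:\mathcal{C}_X\rightarrow\mathcal{C}_D$. That criterion says $\pi(\mathcal{C}_D,x)\rightarrow\pi(\mathcal{C}_X,x)$ is a closed immersion iff every $E_D\in\mathcal{C}_D$ is a subquotient of $E|_D$ for some $E\in\mathcal{C}_X$. The whole proof therefore amounts to identifying objects of $\mathcal{C}_X$ with suitable germs in $\GLF(D)$ via the fully faithful functor $\GLF(D)\rightarrow\Coh(X)$, $(E_U,U)\mapsto {j_U}_*E_U$, from the preceding Lemma. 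That functor requires $d\geq 2$ and $S_2$, which are exactly our hypotheses. Ampleness of $D$ ensures that every open $U\supsetneq D$ has complement of codimension $\geq 2$: the complement is closed and misses the ample divisor, so it contains no curve and hence is finite.

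$(1)\Rightarrow(2)$. Given $E_D\in\mathcal{C}_D$, I use [Theorem~\ref{Thm1}, (2)] to obtain $E\in\mathcal{C}_X$ with $E_D$ a subquotient of $E|_D$ in $\mathcal{C}_D$. I then take $U=X$ and $E_U=E$, so $(E,X)\in\GLF(D)$ and $E_U|_D=E|_D\in\mathcal{C}_D$, which gives (a). Since $j_X=\mathrm{id}$, we get ${j_X}_*E=E\in\mathcal{C}_X$, which gives (b). This direction is essentially formal.

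$(2)\Rightarrow(1)$. Given $E_D$, I take $(E_U,U)$ as in (2) and set $E:={j_U}_*E_U\in\mathcal{C}_X$. Since the objects of $\mathcal{C}_X$ are vector bundles, $E$ is locally free on $X$. Its restriction to $U$ is $E_U$, because $j_U^*{j_U}_*E_U\cong E_U$ for an open immersion. As $D\subseteq U$, it follows that $E|_D=(E|_U)|_D=E_U|_D$. By (a), $E_D$ is a subquotient of $E|_D$ in $\mathcal{C}_D$, and [Theorem~\ref{Thm1}, (2)] then gives the closed immersion.

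The main point to check is the compatibility $E|_D\cong E_U|_D$ together with the fact that the subquotient relation is taken inside $\mathcal{C}_D$. Both are immediate once we know ${j_U}_*E_U$ is an honest object of $\mathcal{C}_X$, and condition (b) supplies exactly that. So the only real input is [Theorem~\ref{Thm1}, (2)], and the role of $\GLF(D)$ is to phrase the condition intrinsically near $D$. I would also remark, via the Lemma, that the choice of representative $(E_U,U)$ does not affect ${j_U}_*E_U$. This remark is needed only to make condition (2) well posed on germs, not for the argument itself.
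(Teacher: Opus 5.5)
Your proposal is correct and follows the paper's proof: $(1)\Rightarrow(2)$ uses [Theorem~\ref{Thm1}, (2)] with the trivial germ $(E,X)$, and $(2)\Rightarrow(1)$ notes that $({j_U}_*E_U)|_D\cong E_U|_D$ and then applies [Theorem~\ref{Thm1}, (2)] again. As you observe, the hypotheses $S_2$, $d\geq 2$ and ampleness (through the fully faithful functor $\GLF(D)\rightarrow\Coh(X)$) only make condition (2) well posed on germs and are not used in the argument itself.
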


\begin{proof}
    $(1)\Rightarrow (2)$ For any $E_D\in\mathcal{C}_D$, there exists $E\in\mathcal{C}_X$ such that $E_D$ is a subquotient of $E|_D$ in $\mathcal{C}_D$ by [Theorem~\ref{Thm1}, (2)]. Take $(E,X)\in \GLF(D)$, then it follows immediately.

    $(2)\Rightarrow (1)$ For any $E_D\in\mathcal{C}_D$, there exists $(E_U,U)\in\GLF(D)$ such that $E_U|_D\in\mathcal{C}_D$ and $E_D$ is a subquotient of $E_U|_D\in\mathcal{C}_D$ in $\mathcal{C}_D$, and ${j_U}_*E_U\in\mathcal{C}_X$ where $j_U:U\hookrightarrow X$ is the open immersion. Note that ${j_U}_*E_U|_D=({j_U}_*E_U|_U)|_D\cong E_U|_D$ and $E_D$ is a subquotient of $E_U|_D\in\mathcal{C}_D$ in $\mathcal{C}_D$. Hence the induced homomorphism $\varphi:\pi_D\rightarrow \pi_C$ is a closed immersion by [Theorem~\ref{Thm1}, (2)].
\end{proof}

\begin{Corollary}\label{Main?}
    Let $k$ be a field, $X$ a connected scheme proper over $k$ of dimension $d\geq 2$ satisfying $S_2$, $D\subsetneq X$ an ample effective connected divisor, $x\in D(k)$. Then the following conditions are equivalent:
    \begin{enumerate}
    \item The induced homomorphism $\pi(\mathcal{C}_D,x)\rightarrow\pi(\mathcal{C}_X,x)$ is an isomorphism.
    \item \begin{enumerate}
    	\item The functor $|_D:\mathcal{C}_X\rightarrow \mathcal{C}_D$ is fully faithful.
            \item For any irreducible object $E\in\mathcal{C}_X$, $E|_D$ is irreducible in $\mathcal{C}_D$.
            \item For any $E_D\in \mathcal{C}_D$, there exists $(E_U,U)\in\GLF(D)$ such that
    \begin{enumerate}
    \item $E_U|_D\in\mathcal{C}_D$ and $E_D$ is a subquotient of $E_U|_D\in\mathcal{C}_D$ in $\mathcal{C}_D$.
    \item ${j_U}_*E_U\in\mathcal{C}_X$ where $j_U:U\hookrightarrow X$ is the open immersion.
\end{enumerate}
            \end{enumerate}
\end{enumerate}
\end{Corollary}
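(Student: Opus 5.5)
An affine group scheme homomorphism is an isomorphism iff it is both faithfully flat and a closed immersion. A faithfully flat homomorphism is an epimorphism, and a closed immersion is a monomorphism. A homomorphism $\varphi\colon G\to H$ that is a closed immersion and faithfully flat corresponds to an injective and surjective map of Hopf algebras $\mathcal{O}(H)\to\mathcal{O}(G)$, hence an isomorphism. Conversely, an isomorphism is trivially both. So the plan is to split condition (1) into these two properties and match each with part of condition (2), using Proposition~\ref{generalsur} and Proposition~\ref{Lefschetzclosedimmersion}.

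First, apply Proposition~\ref{generalsur} with $\mathcal{C}=\mathcal{C}_X$, $\mathcal{D}=\mathcal{C}_D$, $\Phi=|_D$ and the fibre functor $|_x$. We may do so because $x\in D(k)$ gives $(E|_D)|_x=E|_x$, so $|_D$ is compatible with the fibre functors. The restriction $|_D$ is an exact tensor functor: it is exact on vector bundles since they are flat, and it is a tensor functor since restriction commutes with $\otimes$. It is also implicit in the notation of the statement that it lands in $\mathcal{C}_D$. Proposition~\ref{generalsur} then says that $\pi(\mathcal{C}_D,x)\to\pi(\mathcal{C}_X,x)$ is faithfully flat iff (2)(a) and (2)(b) hold.

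Second, Proposition~\ref{Lefschetzclosedimmersion} applies verbatim under the present hypotheses: $X$ is connected, proper, of dimension $\geq 2$ and satisfies $S_2$, and $D$ is an ample effective connected divisor. It says the homomorphism is a closed immersion iff (2)(c) holds, and (2)(c) is literally condition (2) of that proposition.

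Combining the two steps, (1) holds iff (2)(a), (2)(b) and (2)(c) all hold, which is the corollary. There is no real obstacle here. The only point needing care is the standard fact that faithfully flat plus closed immersion implies isomorphism; one can either cite it (e.g. from \cite{Mil12}) or argue through the Hopf algebra map as above.
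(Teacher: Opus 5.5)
Your proposal is correct and follows exactly the paper's route: the paper's proof just cites Proposition~\ref{generalsur} (faithfully flat $\Leftrightarrow$ (2)(a) and (2)(b)) and Proposition~\ref{Lefschetzclosedimmersion} (closed immersion $\Leftrightarrow$ (2)(c)). The only addition in your version is that you spell out the standard fact, left implicit in the paper, that a homomorphism which is both faithfully flat and a closed immersion is an isomorphism, since the Hopf algebra map is then both injective and surjective.
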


\begin{proof}
    It follows immediately by Proposition~\ref{generalsur} and Proposition~\ref{Lefschetzclosedimmersion}.
\end{proof}

\begin{Proposition}\label{isosubcate}
     Let $k$ be a field, $\mathcal{C}'$ $(\text{resp. } \mathcal{D}')$ neutral Tannakian category over $k$ with fibre functor $\omega$, $\mathcal{C}\subseteq\mathcal{C}'$ $(\text{resp. }\mathcal{D}\subseteq\mathcal{D}')$ Tannakian subcategories, $\pi_C$, $\pi_{C'}$, $\pi_D$, $\pi_{D'}$ their Tannaka group schemes respectively, $\Phi':\mathcal{C}'\rightarrow \mathcal{D}'$ an exact tensor functor such that $\Phi'(E)\in\mathcal{D}$ for any $E\in\mathcal{C}$. If the induced homomorphism $\pi_{D'}\rightarrow\pi_{C'}$ is an isomorphism, then the following conditions are equivalent:
    \begin{enumerate}
        \item The induced homomorphism $\pi_{D}\rightarrow\pi_{C}$ is a closed immersion.
        \item The induced homomorphism $\pi_{D}\rightarrow\pi_{C}$ is an isomorphism.
        \item For any $E'\in\mathcal{C}'$, if $\Phi'(E')\in\mathcal{D}$ then $E'\in\mathcal{C}$.
    \end{enumerate}
\end{Proposition}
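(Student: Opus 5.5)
The plan is to prove $(2)\Rightarrow(1)$, then $(1)\Rightarrow(3)$, then $(3)\Rightarrow(2)$. Throughout I will use the commutative square of Tannaka groups
\[
\begin{tikzcd}
\pi_{D'}\arrow[r,"\cong"]\arrow[d,two heads] & \pi_{C'}\arrow[d,two heads]\\
\pi_D\arrow[r,"\varphi"] & \pi_C.
\end{tikzcd}
\]
It comes from the commutative square of functors $\mathcal{C}\hookrightarrow\mathcal{C}'\xrightarrow{\Phi'}\mathcal{D}'$ and $\mathcal{C}\xrightarrow{\Phi}\mathcal{D}\hookrightarrow\mathcal{D}'$, where $\Phi=\Phi'|_{\mathcal{C}}$. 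The vertical maps are faithfully flat by [Theorem~\ref{Thm1}, (1)], since Tannakian subcategories are full and closed under subquotients. Since the top arrow is an isomorphism and the right vertical arrow is faithfully flat, the composite $\pi_{D'}\to\pi_C$ is faithfully flat. It factors as $\pi_{D'}\twoheadrightarrow\pi_D\xrightarrow{\varphi}\pi_C$, so $\varphi$ is always faithfully flat. This immediately gives $(1)\Rightarrow(2)$, because a homomorphism of affine group schemes that is both faithfully flat and a closed immersion is an isomorphism. The implication $(2)\Rightarrow(1)$ is trivial.

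For $(2)\Rightarrow(3)$, let $E'\in\mathcal{C}'$ with $\Phi'(E')\in\mathcal{D}$. Since $\Phi'$ is an equivalence ($\pi_{D'}\cong\pi_{C'}$), I will regard $\Phi'$ as identifying $\mathcal{C}'$ with $\mathcal{D}'$. Under this identification, $\Phi'$ carries the Tannakian subcategory $\mathcal{C}$ to a Tannakian subcategory $\Phi'(\mathcal{C})\subseteq\mathcal{D}$. This subcategory is closed under isomorphism and subquotients, and it corresponds to the quotient $\pi_{D'}\to\pi_C$. Because $\varphi$ is an isomorphism, the quotients $\pi_{D'}\to\pi_D$ and $\pi_{D'}\to\pi_C$ have the same kernel. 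Tannakian subcategories of $\mathcal{D}'$ correspond bijectively to quotients of $\pi_{D'}$, namely $\mathcal{D}$ consists of the representations on which that kernel acts trivially. Hence $\Phi'(\mathcal{C})=\mathcal{D}$ as full subcategories of $\mathcal{D}'$ (up to isomorphism). Thus $\Phi'(E')\cong\Phi'(E)$ for some $E\in\mathcal{C}$. Full faithfulness of $\Phi'$ then gives $E'\cong E$, and $\mathcal{C}$ is replete as a Tannakian subcategory, so $E'\in\mathcal{C}$.

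For $(3)\Rightarrow(1)$, I will use [Theorem~\ref{Thm1}, (2)]. Given $F\in\mathcal{D}\subseteq\mathcal{D}'$, essential surjectivity of $\Phi'$ produces $E'\in\mathcal{C}'$ with $\Phi'(E')\cong F\in\mathcal{D}$. By (3), $E'\in\mathcal{C}$, so $F\cong\Phi(E')$, and in particular $F$ is a subquotient of an object of the form $\Phi(E)$. Therefore $\varphi$ is a closed immersion; in fact $\Phi$ is essentially surjective.

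The main obstacle is the step $(2)\Rightarrow(3)$, specifically making precise the correspondence between Tannakian subcategories and quotient group schemes. One point to handle explicitly is the "closed under isomorphism" convention for subcategories. Another is the claim that the two quotient maps from $\pi_{D'}$ have the same kernel; this holds because the diagram commutes and $\varphi$ is an isomorphism. If that correspondence is inconvenient, I would instead argue directly: a full, exact, essentially surjective-on-subquotients functor $\Phi:\mathcal{C}\to\mathcal{D}$ that is an equivalence (by the isomorphism $\varphi$) forces every object of $\mathcal{D}$ to be of the form $\Phi(E)$, and I would then transport this through $\Phi'$ as above.
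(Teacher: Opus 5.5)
Your proof is correct. For $(1)\Leftrightarrow(2)$ and $(3)\Rightarrow(1)$ you follow the paper. The commutative square shows that $\varphi$ is always faithfully flat, so a closed immersion is forced to be an isomorphism. For $(3)\Rightarrow(1)$, condition $(3)$ together with essential surjectivity of $\Phi'$ feeds into [Theorem~\ref{Thm1}, (2)].

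The genuine difference is in $(2)\Rightarrow(3)$.

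\textbf{The paper's route.} It argues by contradiction. Given $E'\in\mathcal{C}'\setminus\mathcal{C}$ with $\Phi'(E')\in\mathcal{D}$, it enlarges $\mathcal{C}$ to $\langle\mathcal{C}\cup\{E'\}\rangle_{\mathcal{C}'}\supsetneq\mathcal{C}$. It asserts that the corresponding subcategory generated on the $\mathcal{D}'$ side is still $\mathcal{D}$. It then gets a contradiction from a three-column diagram of quotient group schemes.

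\textbf{Your route.} You argue directly that the essential image $\Phi'(\mathcal{C})$ coincides with $\mathcal{D}$ inside $\mathcal{D}'$. You do this either via the correspondence between Tannakian subcategories and kernels of quotients of $\pi_{D'}$, or, in your fallback, because $\varphi$ being an isomorphism makes $\Phi=\Phi'|_{\mathcal{C}}\colon\mathcal{C}\to\mathcal{D}$ essentially surjective. You then conclude by full faithfulness of $\Phi'$ and repleteness of $\mathcal{C}$.

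\textbf{Comparison.} Your version is shorter, and it makes explicit a fact the paper uses silently. The paper's claim $\langle\Phi'(\mathcal{C}\cup\{E'\})\rangle_{\mathcal{D}'}=\mathcal{D}$ already requires that $\Phi'(\mathcal{C})$ exhausts $\mathcal{D}$ up to isomorphism, and that fact alone finishes the argument. The paper's contradiction additionally needs the (standard but unstated) fact that a strict inclusion of Tannakian subcategories induces a non-isomorphic quotient map. Your fallback, essential surjectivity of $\Phi$ plus full faithfulness of $\Phi'$, is the cleanest of the three.

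\textbf{Two small remarks.}
\begin{enumerate}
\item Your kernel argument only uses that $\varphi$ is a monomorphism, so it actually gives $(1)\Rightarrow(3)$ directly.
\item The chain you announce at the start, $(2)\Rightarrow(1)\Rightarrow(3)\Rightarrow(2)$, does not match the one you carry out, namely $(1)\Leftrightarrow(2)$, $(2)\Rightarrow(3)$, $(3)\Rightarrow(1)$. The latter is complete, so this is only cosmetic.
\end{enumerate}
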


\begin{proof}
    $(1)\Rightarrow (2)$ By following commutative diagrams, the homomorphism $\pi_{D}\rightarrow\pi_{C}$ is an isomorphism.
    \[
        \begin{aligned}
        \begin{tikzcd}
           \mathcal{C}'\arrow[r,"{\Phi',\cong}"]&\mathcal{D}'\\
            \mathcal{C}\arrow[r,"{\Phi'}"]\arrow[u, hook]&\mathcal{D}\arrow[u, hook]
        \end{tikzcd}
        \quad\quad
	\begin{tikzcd}
            \pi_{D'}\arrow[r,"\cong"]\arrow[d, two heads]&\pi_{C'}\arrow[d, two heads]\\
            \pi_D\arrow[r,two heads]&\pi_C
        \end{tikzcd}
        \end{aligned}
    \]
    
    $(2)\Rightarrow (3)$ Suppose there exists $E'\in\mathcal{C}'\setminus\mathcal{C}$ such that $\Phi'(E')\in\mathcal{D}$. It follows that $\mathcal{C}\subsetneq \langle \mathcal{C}\cup \{E'\}\rangle_{\mathcal{C}'}$ and $\langle \{\Phi'(E)\}_{E\in\mathcal{C}\cup \{E'\}}\rangle_{\mathcal{D}'}=\mathcal{D}$.
    Then the natural homomorphism $\pi_{\langle \mathcal{C}\cup \{E'\}\rangle_{\mathcal{C}'}}\twoheadrightarrow\pi_{\mathcal{C}}$ is not an isomorphism and $\pi_{\langle \{\Phi'(E)\}_{E\in\mathcal{C}\cup \{E'\}}\rangle_{\mathcal{D}'}}\cong \pi_{\mathcal{D}}$.
    Then we have the following commutative diagram
    \[
        \begin{tikzcd}
             \pi_{D'}\arrow[r, two heads]\arrow[d,"\cong"]&\pi_{\langle \{\Phi'(E)\}_{E\in\mathcal{C}\cup \{E'\}}\rangle_{\mathcal{D}'}}\arrow[d,two heads]\arrow[r,"\cong"]&\pi_{D}\arrow[d,"\cong"]\\
            \pi_{C'}\arrow[r, two heads]&\pi_{\langle \mathcal{C}\cup \{E'\}\rangle_{\mathcal{C}'}}\arrow[r, two heads, "\not\cong"]&\pi_{C}
        \end{tikzcd}
    \]
    This commutative diagram yields a contradiction. Hence for any $E'\in\mathcal{C}'$, if $\Phi'(E')\in\mathcal{D}$ then $E'\in\mathcal{C}$.

    $(3)\Rightarrow (1)$ For any $F\in \mathcal{D}\subseteq \mathcal{D}'$, there exists $E'\in\mathcal{C}'$ such that $\Phi'(E')= F\in\mathcal{D}$. It follows that $E'\in\mathcal{C}$. In other words, for any $F\in \mathcal{D}$, there exists $E'\in\mathcal{C}$ such that $\Phi'(E')=F$.  Hence the induced homomorphism $\pi_{D}\rightarrow\pi_{C}$ is a closed immersion by [Theorem~\ref{Thm1}, (2)].
\end{proof}

\begin{Proposition}\label{isoSerre}
    Let $k$ be a field, $f:X\rightarrow S$ a morphism of connected schemes proper over $k$, $x\in X(k)$ lying over $s\in S(k)$, $\mathcal{C}'_X,\mathcal{C}'_S$ the Tannakian categories over $X$, $S$ respectively, $\mathcal{C}_X\subseteq \mathcal{C}'_X,\mathcal{C}_S\subseteq \mathcal{C}'_S$ the Tannakian subcategories, pullback induces functors $f^*:\mathcal{C}'_S\rightarrow \mathcal{C}'_X$ and $f^*:\mathcal{C}_S\rightarrow \mathcal{C}_X$. Suppose the saturation category $\overline{\mathcal{C}}_X$ $(\text{resp. } \overline{\mathcal{C}}_S)$ is a Tannakian subcategory of $\mathcal{C}'_X$ $(\text{resp. } \mathcal{C}'_S)$. If the induced homomorphisms $\pi(\mathcal{C}'_X,x)\rightarrow \pi(\mathcal{C}'_S,s)$ and $\pi(\mathcal{C}_X,x)\rightarrow \pi(\mathcal{C}_S,s)$ are isomorphisms, then the induced homomorphism $\pi(\overline{\mathcal{C}}_X,x)\rightarrow \pi(\overline{\mathcal{C}}_S,s)$ is an isomorphism.
\end{Proposition}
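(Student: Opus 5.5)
The plan is to apply Proposition~\ref{isosubcate} with $\mathcal{C}'=\mathcal{C}'_S$, $\mathcal{D}'=\mathcal{C}'_X$, $\mathcal{C}=\overline{\mathcal{C}}_S$, $\mathcal{D}=\overline{\mathcal{C}}_X$ and $\Phi'=f^*$. The functor $f^*$ is an exact tensor functor $\mathcal{C}'_S\to\mathcal{C}'_X$. By the Remark citing \cite[Lemma~4.5]{LiTiKu26}, $f^*$ sends $\overline{\mathcal{C}}_S$ into $\overline{\mathcal{C}}_X$. By hypothesis $\overline{\mathcal{C}}_S\subseteq\mathcal{C}'_S$ and $\overline{\mathcal{C}}_X\subseteq\mathcal{C}'_X$ are Tannakian subcategories, and $\pi(\mathcal{C}'_X,x)\to\pi(\mathcal{C}'_S,s)$ is an isomorphism. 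So Proposition~\ref{isosubcate} reduces the claim to showing that $\pi(\overline{\mathcal{C}}_X,x)\to\pi(\overline{\mathcal{C}}_S,s)$ is a closed immersion. Equivalently, I will verify condition (3) there: if $E'\in\mathcal{C}'_S$ and $f^*E'\in\overline{\mathcal{C}}_X$, then $E'\in\overline{\mathcal{C}}_S$.

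Since $\pi(\mathcal{C}'_X,x)\to\pi(\mathcal{C}'_S,s)$ is an isomorphism, $f^*:\mathcal{C}'_S\to\mathcal{C}'_X$ is a tensor equivalence. Concretely, by Theorem~\ref{Thm1} it is fully faithful, essentially surjective up to subquotients, and its essential image is closed under subquotients, so it is an equivalence. The same reasoning applies to $f^*:\mathcal{C}_S\to\mathcal{C}_X$.

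Let $E'\in\mathcal{C}'_S$ with $f^*E'=:F\in\overline{\mathcal{C}}_X$. Choose a filtration $0=F_0\subseteq F_1\subseteq\cdots\subseteq F_n=F$ with $F_{i+1}/F_i\in\mathcal{C}_X$. Because $\overline{\mathcal{C}}_X$ is a Tannakian subcategory of $\mathcal{C}'_X$, this is a filtration by subobjects in the abelian category $\mathcal{C}'_X$. The equivalence $f^*$ then produces a filtration $0=E'_0\subseteq\cdots\subseteq E'_n=E'$ in $\mathcal{C}'_S$ with $f^*E'_i\cong F_i$ compatibly with the inclusions. By exactness, $f^*(E'_{i+1}/E'_i)\cong F_{i+1}/F_i$.

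Each $F_{i+1}/F_i$ lies in $\mathcal{C}_X$, and $f^*:\mathcal{C}_S\to\mathcal{C}_X$ is an equivalence, so $F_{i+1}/F_i\cong f^*G_i$ for some $G_i\in\mathcal{C}_S\subseteq\mathcal{C}'_S$. Full faithfulness of $f^*$ on $\mathcal{C}'_S$ then gives $E'_{i+1}/E'_i\cong G_i\in\mathcal{C}_S$. Since the $E'_i$ are vector bundles, they are subobjects in $\mathcal{C}'_S\subseteq\Vect(S)$. Therefore $E'$ has a filtration with graded pieces in $\mathcal{C}_S$, i.e. $E'\in\overline{\mathcal{C}}_S$. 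This is condition (3), and Proposition~\ref{isosubcate} gives the isomorphism. (Equivalently, every object of $\overline{\mathcal{C}}_X$ is of the form $f^*E'$ with $E'\in\overline{\mathcal{C}}_S$, which yields the closed immersion directly via [Theorem~\ref{Thm1}, (2)].)

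The main point to check carefully is the transport of the filtration. One must confirm that subobjects of $F$ in $\overline{\mathcal{C}}_X$ are subobjects in $\mathcal{C}'_X$, which holds because $\overline{\mathcal{C}}_X$ is a Tannakian (hence abelian, exactly embedded) subcategory of $\mathcal{C}'_X$. One must also confirm that the equivalence $f^*$ identifies subobject lattices and quotients. Everything else is formal.
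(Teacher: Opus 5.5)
Your proposal is correct and follows essentially the same route as the paper. You transport a filtration of $f^*E'$, with graded pieces in $\mathcal{C}_X$, back along the equivalence $f^*:\mathcal{C}'_S\to\mathcal{C}'_X$, then use the isomorphism $\pi(\mathcal{C}_X,x)\cong\pi(\mathcal{C}_S,s)$ to place the graded pieces in $\mathcal{C}_S$, so that $E'\in\overline{\mathcal{C}}_S$. The only difference is bookkeeping: the paper cites Proposition~\ref{isosubcate} for the graded pieces and finishes with essential surjectivity plus full faithfulness of $f^*:\overline{\mathcal{C}}_S\to\overline{\mathcal{C}}_X$, whereas you argue the graded pieces directly and cite Proposition~\ref{isosubcate} for the pair of saturations.
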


\begin{proof}
    For any $E\in\overline{\mathcal{C}}_X\subseteq\mathcal{C}'_X$, since the induced homomorphism $\pi(\mathcal{C}'_X,x)\rightarrow \pi(\mathcal{C}'_S,s)$ is an isomorphism, there exists $F\in\mathcal{C}'_S$ such that $E\cong f^*F$. Consider the filtration of $E$ in $\overline{\mathcal{C}}_X$:
    $$0=E_0\hookrightarrow E_1\hookrightarrow\cdots\hookrightarrow E_n=E,$$
    where $E_{i+1}/E_i\in\mathcal{C}_X$ for any $0\leq i\leq n-1$. Since the induced homomorphism $\pi(\mathcal{C}'_X,x)\rightarrow \pi(\mathcal{C}'_S,s)$ is an isomorphism, there exists a filtration in $\mathcal{C}'_S$:
    $$0=F_0\hookrightarrow F_1\hookrightarrow\cdots\hookrightarrow F_n=F,$$
    where $E_{i}\cong f^*F_{i}$ for any $0\leq i\leq n$. Since the induced homomorphism $\pi(\mathcal{C}_X,x)\rightarrow \pi(\mathcal{C}_S,s)$ is an isomorphisms and $f^*(F_{i+1}/F_i)\cong E_{i+1}/E_i\in\mathcal{C}_X$, we have $F_{i+1}/F_i\in\mathcal{C}_S$ for any $0\leq i\leq n-1$ by Proposition~\ref{isosubcate}. It follows that $F_{i}\in\overline{\mathcal{C}}_S$ for any $1\leq i\leq n$. In particular, we have $F\in\overline{\mathcal{C}}_S$. Since the functor $f^*:\overline{\mathcal{C}}_S\rightarrow\overline{\mathcal{C}}_X$ is fully faithful, the induced homomorphism $\pi(\overline{\mathcal{C}}_X,x)\rightarrow \pi(\overline{\mathcal{C}}_S,s)$ is an isomorphism.
\end{proof}

\begin{Proposition}\label{saturated}
     Let $k$ be a field, $X$ a connected scheme proper over $k$, $D\subsetneq X$ an effective connected divisor, $x\in D(k)$. Suppose the induced homomorphism $\pi(\mathcal{C}_D,x)\rightarrow \pi(\mathcal{C}_X,x)$ is an isomorphism and $H^i(X,E(-D))=0$ for any $E\in \mathcal{C}_X$, $i=1,2$. Then the induced homomorphism $\pi(\overline{\mathcal{C}}_D,x)\rightarrow \pi(\overline{\mathcal{C}}_X,x)$ is an isomorphism.
\end{Proposition}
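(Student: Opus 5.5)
Since $\pi(\mathcal{C}_D,x)\to\pi(\mathcal{C}_X,x)$ is an isomorphism, the restriction $|_D:\mathcal{C}_X\to\mathcal{C}_D$ is an equivalence of Tannakian categories. In particular it is fully faithful, it is essentially surjective, and it sends irreducibles to irreducibles. I will prove that the restriction $|_D:\overline{\mathcal{C}}_X\to\overline{\mathcal{C}}_D$ is also an equivalence, which gives the claimed isomorphism of Tannaka groups. The argument has three steps.

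\textbf{Step 1 (faithful flatness).} The hypothesis with $i=1$, and fully faithfulness via [Proposition~\ref{H01fullyfaithful}, (1)], give $H^0(X,E(-D))=H^1(X,E(-D))=0$ for all $E\in\mathcal{C}_X$. Condition (1) of Corollary~\ref{H01generalsur} then holds, so $\pi(\overline{\mathcal{C}}_D,x)\to\pi(\overline{\mathcal{C}}_X,x)$ is faithfully flat. In particular $|_D$ is fully faithful on $\overline{\mathcal{C}}_X$, and $\overline{\mathcal{C}}_X|_D$ is closed under subobjects in $\overline{\mathcal{C}}_D$. It remains to show essential surjectivity.

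\textbf{Step 2 (essential surjectivity).} By Proposition~\ref{Hisaturated}, the vanishing of $H^1$ and $H^2$ of $E(-D)$ extends from $\mathcal{C}_X$ to all $E\in\overline{\mathcal{C}}_X$; this uses that $\mathcal{C}_X$ is rigid tensor. Let $F\in\overline{\mathcal{C}}_D$ have a filtration with graded pieces in $\mathcal{C}_D$. I argue by induction on the filtration length $n$; the case $n=1$ is the hypothesis that $\mathcal{C}_X\to\mathcal{C}_D$ is essentially surjective. For the inductive step, write $0\to F'\to F\to Q\to0$ with $F'\cong E'|_D$ for some $E'\in\overline{\mathcal{C}}_X$ and $Q\cong G|_D$ for some $G\in\mathcal{C}_X$. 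This extension is a class in
\[
\Ext^1_D(G|_D,E'|_D)=H^1(D,\mathcal{H}om(G,E')|_D).
\]
Put $H=\mathcal{H}om(G,E')=G^\vee\otimes E'$. Then $H\in\overline{\mathcal{C}}_X$, since $\overline{\mathcal{C}}_X$ is a rigid tensor category by the Remark after the definition of saturation. The long exact sequence $(*)$ for $H$ reads
\[
H^1(X,H(-D))\to H^1(X,H)\to H^1(D,H|_D)\to H^2(X,H(-D)).
\]
The two outer terms vanish, so $H^1(X,H)\cong H^1(D,H|_D)$, that is, $\Ext^1_X(G,E')\cong\Ext^1_D(G|_D,E'|_D)$. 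Here I use that $G$ and $E'$ are locally free, so $\Ext^1$ is $H^1$ of $\mathcal{H}om$, and that the identification is compatible with restriction. Hence the extension lifts to $0\to E'\to E\to G\to0$ on $X$. The middle term $E$ is a vector bundle lying in $\overline{\mathcal{C}}_X$, because it is filtered by $E'$ and $G$, and $E|_D\cong F$. This completes the induction.

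\textbf{Step 3 (conclusion).} The functor $|_D:\overline{\mathcal{C}}_X\to\overline{\mathcal{C}}_D$ is fully faithful (Step 1) and essentially surjective (Step 2). Its essential image is therefore trivially closed under subquotients, and every object of $\overline{\mathcal{C}}_D$ is a restriction. By Theorem~\ref{Thm1} (1) and (2), the homomorphism is both faithfully flat and a closed immersion, hence an isomorphism.

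The main obstacle is the lifting step in Step 2. One must check that the identification of $\Ext^1$ with cohomology of $\mathcal{H}om$ commutes with restriction to $D$, so that the lifted extension really restricts to the given $F$ and not merely to some extension with the same class up to automorphisms. A secondary point is ensuring that the vanishing of $H^2$ is available for objects of the saturation, not just for $\mathcal{C}_X$; Proposition~\ref{Hisaturated} handles this.
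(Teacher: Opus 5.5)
Your proposal is correct and follows essentially the same route as the paper. You get faithful flatness from Corollary~\ref{H01generalsur}, using $H^0$-vanishing from Proposition~\ref{H01fullyfaithful}; you lift the filtration step by step via the restriction map on $\Ext^1$, using $H^2$-vanishing on $\overline{\mathcal{C}}_X$ from Proposition~\ref{Hisaturated}; and you conclude with Theorem~\ref{Thm1}. The only cosmetic difference is that you also invoke $H^1$-vanishing to get an isomorphism $\Ext^1_X(G,E')\cong\Ext^1_D(G|_D,E'|_D)$, whereas the paper uses only the surjectivity coming from $H^2$-vanishing, which is all the lifting requires.
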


\begin{proof}
    By Proposition~\ref{H01fullyfaithful}, we have $H^0(X,E(-D))=0$ for any $E\in \mathcal{C}_X$. By Corollary~\ref{H01generalsur}, the induced homomorphism $\pi(\overline{\mathcal{C}}_D,x)\rightarrow \pi(\overline{\mathcal{C}}_X,x)$ is faithfully flat.
    
    For any $E_D\in \overline{\mathcal{C}}_D$, consider the filtration of $E_D$ in $\overline{\mathcal{C}}_D$:
    $$0=E_{D_0}\hookrightarrow E_{D_1}\hookrightarrow \cdots\hookrightarrow E_{D_n}=E_D,$$
    where $E_{D_{i+1}}/E_{D_i}\in\mathcal{C}_D$ for any $0\leq i\leq n-1$. Consider the exact sequence
    $$0\rightarrow E_{D_1}\rightarrow E_{D_2}\rightarrow E_{D_{2}}/E_{D_1}\rightarrow 0.$$
    Since the induced homomorphism $\pi(\mathcal{C}_D,x)\rightarrow \pi(\mathcal{C}_X,x)$ is an isomorphism, there exist $E_1, E^1\in\mathcal{C}_X$ such that $E_1|_D\cong E_{D_1}$ and $E^1|_D\cong E_{D_{2}}/E_{D_1}$. Consider the following exact sequence
    $$0\rightarrow \mathcal{H}om_X(E^1,E_1)\otimes_{\mathcal{O}_X}\mathcal{O}_X(-D)\rightarrow \mathcal{H}om_X(E^1,E_1)\rightarrow \mathcal{H}om_D(E^1|_D,E_1|_D)\rightarrow 0.$$
    Then we have a long exact sequence 
    $$\begin{aligned}
        0\rightarrow H^0(X,\mathcal{H}om_X(E^1,E_1)(-D))\rightarrow H^0(X,\mathcal{H}om_X(E^1,E_1))\rightarrow H^0(D,\mathcal{H}om_D(E^1|_D,E_1|_D))\rightarrow \\
        H^1(X,\mathcal{H}om_X(E^1,E_1)(-D))\rightarrow H^1(X,\mathcal{H}om_X(E^1,E_1))\rightarrow H^1(D,\mathcal{H}om_D(E^1|_D,E_1|_D))\rightarrow\\
        H^2(X,\mathcal{H}om_X(E^1,E_1)(-D))\rightarrow H^2(X,\mathcal{H}om_X(E^1,E_1))\rightarrow H^2(D,\mathcal{H}om_D(E^1|_D,E_1|_D))\rightarrow \cdots.
    \end{aligned}$$
    Since $\mathcal{H}om_X(E^1,E_1)\in \mathcal{C}_X$, we have $H^2(X,\mathcal{H}om_X(E^1,E_1)(-D))=0$. It follows that the natural map $H^1(X,\mathcal{H}om_X(E^1,E_1))\rightarrow H^1(X,\mathcal{H}om_D(E^1|_D,E_1|_D))$ is surjective. In other words, we obtain
    $$\Ext^1(E^1,E_1)\twoheadrightarrow \Ext^1(E^1|_D,E_1|_D).$$
    So there exists $E_2\in\overline{\mathcal{C}}_X$ which is an extension of $E^1$ by $E_1$ such that $E_2|_D\cong E_{D_2}$. Repeating the above method and applying Proposition~\ref{Hisaturated}, we obtain inductively that there exists $E_i\in\overline{\mathcal{C}}_X$ such that $E_i|_D\cong E_{D_i}$ for any $1\leq i\leq n$. In particular, for any $E_D\in\overline{\mathcal{C}}_D$, there exists $E\in \overline{\mathcal{C}}_X$ such that $E|_D\cong E_D$. Hence by [Theorem~\ref{Thm1}, (2)], the induced homomorphism $\pi(\overline{\mathcal{C}}_D,x)\rightarrow \pi(\overline{\mathcal{C}}_X,x)$ is a closed immersion.

    Consequently, the induced homomorphism $\pi(\overline{\mathcal{C}}_D,x)\rightarrow \pi(\overline{\mathcal{C}}_X,x)$ is an isomorphism.
\end{proof}

\section{The Lefschetz type theorem for fundamental group schemes}

\begin{Definition}
Let $k$ be a field, $X$ a geometrically reduced connected scheme proper over $k$, $x\in X(k)$, $E$ a vector bundle on $X$ of rank $r$. If $k$ is of positive characteristic, then $F_X:X\rightarrow X$ is the absolute Frobenius morphism. Then $E$ is said to be
\begin{itemize}
    \item \textit{numerically flat}, if both $E$ and $E^\vee$ are nef.
    \item \textit{Nori semistable}, if for any smooth proper curve $f:C\rightarrow X$, $f^*E$ is semistable of degree 0 on $C$.
    \item \textit{finite}, if there exist $f(t)\neq g(t)\in\mathbb{N}[t]$ such that $f(E)\cong g(E)$, where
    $$h(E):=\bigoplus_{i=0}^m\bigoplus_{j=1}^{n_i}E^{\otimes i}\text{ for any }h(t)=\sum\limits_{i=0}^mn_it^i\in\mathbb{N}[t].$$
    \item \textit{essentially finite}, if there exists $E_1\hookrightarrow E_2\hookrightarrow F\in\Vect(X)$ such that $E\cong E_2/E_1$, where $E_1,E_2$ are numerically flat and $F$ is finite.
    \item \textit{Frobenius finite}, if there exist $f(t)\neq g(t)\in\mathbb{N}[t]$ such that $\tilde{f}(E)\cong \tilde{g}(E)$, where 
    $$\tilde{h}(E):=\bigoplus\limits_{i=1}^{m}((F_X^i)^*E)^{\oplus n_i} \text{ for any}h(t)=\sum\limits_{i=0}^mn_it^i\in\mathbb{N}[t].$$
    Denote the set of all Frobenius finite vector bundles on $X$ by $FF(X)$.
    \item \textit{essentially Frobenius finite}, if there exists $E_1\hookrightarrow E_2\hookrightarrow F\in\Vect(X)$ such that $E\cong E_2/E_1$, where $E_1,E_2$ are numerically flat and $F\in FF(X)$.
    \item \textit{Frobenius trivial}, if there exists a positive integer $n$ such that $F_X^{n*} E\cong \mathcal{O}_X^{\oplus r}$.
    \item \textit{\'etale trivializable}, if there exists a finite \'etale covering $\phi: P\rightarrow X$ such that $\phi^* E\cong \mathcal{O}_P^{\oplus r}$.
    \item \textit{unipotent}, if there is a filtration
    $0\hookrightarrow E_1\hookrightarrow \cdots\hookrightarrow E_n=E$
    such that $E_{i+1}/E_i\cong\mathcal{O}_X$ for any $i$.
\end{itemize}
We have the following Tannakian categories:
    \begin{itemize}
        \item $\mathcal{C}^{NF}(X)$: objects consist of numerically flat bundles on $X$.
        \item $\mathcal{C}^{N}(X)$: objects consist of essentially finite bundles on $X$.
        \item $\mathcal{C}^{F}(X)$: objects consist of essentially Frobenius finite bundles on $X$.
        \item $\mathcal{C}^{Loc}(X)$: objects consist of Frobenius trivial bundles on $X$.
        \item $\mathcal{C}^{\acute{e}t}(X)$: objects consist of \'etale trivializable bundles on $X$.
        \item $\mathcal{C}^{uni}(X)$: objects consist of unipotent bundles on $X$.
    \end{itemize}
We have the following Tannaka group schemes:
\begin{itemize}
        \item $\pi^{S}(X,x):=\pi(\mathcal{C}^{NF}(X),x)$, called the \textit{S-fundamental group scheme}.
        \item $\pi^{N}(X,x):=\pi(\mathcal{C}^{N}(X),x)$, called the \textit{Nori fundamental group scheme}.
        \item $\pi^{EN}(X,x):=\pi(\overline{\mathcal{C}^{N}(X)},x)$, called the \textit{extended Nori fundamental group scheme}.
        \item $\pi^{F}(X,x):=\pi(\mathcal{C}^{F}(X),x)$, called the \textit{F-fundamental group scheme}.
        \item $\pi^{EF}(X,x):=\pi(\overline{\mathcal{C}^{F}(X)},x)$, called the \textit{extended F-fundamental group scheme}.
        \item $\pi^{Loc}(X,x):=\pi(\mathcal{C}^{Loc}(X),x)$, called the \textit{local fundamental group scheme}.
        \item $\pi^{ELoc}(X,x):=\pi(\overline{\mathcal{C}^{Loc}(X)},x)$, called the \textit{extended local fundamental group scheme}.
        \item $\pi^{\acute{e}t}(X,x):=\pi(\mathcal{C}^{\acute{e}t}(X),x)$, called the \textit{\'etale fundamental group scheme}.
        \item $\pi^{E\acute{e}t}(X,x):=\pi(\overline{\mathcal{C}^{\acute{e}t}(X)},x)$, called the \textit{extended \'etale fundamental group scheme}.
        \item $\pi^{uni}(X,x):=\pi(\mathcal{C}^{uni}(X),x)$, called the \textit{unipotent fundamental group scheme}.
    \end{itemize}
\end{Definition}

\begin{Remark}\label{tannakianrelation}
    Let $k$ be a field, $X$ a geometrically reduced connected scheme proper over $k$. Then 
    \begin{enumerate}
        \item $\mathcal{C}^{NF}(X)$ and $\mathcal{C}^{uni}(X)$ are saturated categories, i.e. $\overline{\mathcal{C}^{NF}(X)}=\mathcal{C}^{NF}(X)$ and $\overline{\mathcal{C}^{uni}(X)}=\mathcal{C}^{uni}(X)$.
        \item $\mathcal{C}^{*}(X)$ and $\overline{\mathcal{C}^{*}(X)}$ are Tannakian subcategories of $\mathcal{C}^{NF}(X)$ for $*\in\{N,F,Loc,\acute{e}t,uni\}$.
    \end{enumerate}
\end{Remark}

\begin{Lemma}[{\cite[Proposition~5.7]{LiTiBa26}}]\label{iffnumericallyflat}
    Let $k$ be a field, $K/k$ a field extension, $X$ a geometrically reduced connected scheme proper over $k$, $x_K\in X_K(K)$ lying over $x\in X(k)$, $E\in\Vect(X)$. Then $E\in\mathcal{C}^{NF}(X)$ iff $E\otimes_k K\in\mathcal{C}^{NF}(X_K)$.
\end{Lemma}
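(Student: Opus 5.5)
The plan is to reduce the statement to a base-change property of nef line bundles. First, the formation of duals and projective bundles commutes with flat base change. We have $(E\otimes_k K)^\vee\cong E^\vee\otimes_k K$ and $\mathbb{P}(E\otimes_k K)\cong \mathbb{P}(E)_K$, and $\mathcal{O}_{\mathbb{P}(E)_K}(1)$ is the pullback of $\mathcal{O}_{\mathbb{P}(E)}(1)$. Since a vector bundle is nef iff $\mathcal{O}(1)$ is nef on its projective bundle, the lemma follows once we prove the following claim, applied to $E$ and to $E^\vee$.

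\emph{Claim.} For a scheme $Y$ proper over $k$ and a line bundle $L$ on $Y$, $L$ is nef iff $L_K$ is nef on $Y_K$. Here nef means $\deg(L|_C)\ge 0$ for every integral closed curve $C$. We will use that for a proper one-dimensional scheme $Z$, $\deg(L|_Z)$, defined through $\chi(L^{\otimes n}|_Z)$, equals $\sum_i \ell_i\deg(L|_{Z_i})$, where the $Z_i$ are the one-dimensional components and $\ell_i$ their multiplicities. We will also use that $\chi$ is invariant under flat base change of fields.

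($\Leftarrow$) Suppose $L_K$ is nef, and let $C\subseteq Y$ be an integral curve. Then $C_K\subseteq Y_K$ is a proper one-dimensional scheme, and
\[
\deg(L|_C)=\deg(L_K|_{C_K})=\sum_i \ell_i \deg(L_K|_{C_i})\ge 0 .
\]

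($\Rightarrow$) This is the harder direction, because an integral curve $C'\subseteq Y_K$ need not be defined over $k$.

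1. We first choose a finitely generated subextension $k\subseteq K_0\subseteq K$ over which $C'$ is defined. Then we spread $C'$ out to a closed subscheme $\mathcal{C}\subseteq Y\times_k T$, where $T$ is an integral affine $k$-variety with function field $K_0$. After shrinking $T$ we may assume $\mathcal{C}\to T$ is flat and proper of relative dimension one.

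2. By flatness, $\chi(L^{\otimes n}|_{\mathcal{C}_t})$ is locally constant in $t$. Hence $\deg(L_K|_{C'})=\deg(L|_{\mathcal{C}_t})$ for any closed point $t\in T$, whose residue field $k'=\kappa(t)$ is finite over $k$.

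3. The fibre $\mathcal{C}_t$ is a proper curve in $Y_{k'}$, and its degree is a nonnegative combination of degrees on integral curves $C''\subseteq Y_{k'}$. So it suffices to show that $L_{k'}$ is nef for finite extensions $k'/k$.

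4. For a finite extension, the map $\pi:Y_{k'}\to Y$ is finite. An integral curve $C''$ maps either to a point, where $L$ is trivial and the degree is $0$, or finitely onto an integral curve $C\subseteq Y$. In the latter case the projection formula gives $\deg(L_{k'}|_{C''})=\deg(C''/C)\cdot\deg(L|_C)\ge 0$.

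I expect the main obstacle to be step 1 together with the constancy of degree in step 2, that is, carefully setting up the spreading-out so that the family of curves is flat with fibres of pure dimension one. Apart from choosing the point $x$, the geometric reducedness hypothesis is not needed in this argument; it enters only through the definition of the categories involved.
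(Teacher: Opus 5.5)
Your proof is correct. The paper does not prove this lemma itself; it is quoted from \cite[Proposition~5.7]{LiTiBa26}. So there is no in-paper argument to match, and what you have written is a complete, self-contained proof of the quoted statement.

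The reduction is sound. Numerical flatness means that $\mathcal{O}_{\mathbb{P}(E)}(1)$ and $\mathcal{O}_{\mathbb{P}(E^\vee)}(1)$ are nef, and both constructions commute with $-\otimes_k K$. The descent direction follows from two facts: $\chi$ is invariant under flat base change of the field, and $\deg(L|_Z)=\sum_i\ell_i\deg(L|_{Z_i})$ holds for any proper $Z$ of dimension $\le 1$.

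In the ascent direction, the steps you flag as the main obstacle are standard:
\begin{itemize}
\item descend $C'$ to a finitely generated $K_0$ by the limit formalism of EGA~IV~\S8;
\item spread it out over $T$ and apply generic flatness;
\item use that $t\mapsto\chi(\mathcal{F}_t)$ is locally constant on all of $T$, generic point included, for a coherent sheaf flat over $T$ on a proper $T$-scheme.
\end{itemize}
You do not need the fibres to be of pure dimension one, because the multiplicity formula already holds in dimension $\le 1$.

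In step~4, the case of a curve mapping to a point never occurs, since $Y_{k'}\to Y$ is finite. Degrees computed over $k'$ and over $k$ differ by the positive factor $[k':k]$. Neither point affects the sign.

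Compared with simply citing the result, your argument shows that the statement holds for an arbitrary proper $k$-scheme. As you note, it uses neither geometric reducedness nor the point $x$. It also needs no projectivity of $\mathbb{P}(E)$, which an argument through ampleness (insensitive to field extension) and a Kleiman-type criterion would require.
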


\begin{Lemma}[{\cite[Proposition~5.8]{LiTiBa26}}]\label{LiTianbasechange}
    Let $k$ be a field, $K/k$ a separable extension, $X$ a geometrically reduced connected scheme proper over $k$, $x_K\in X_K(K)$ lying over $x\in X(k)$. Then the natural homomorphism $$\pi^S(X_K,x_K)\rightarrow \pi^S(X,x)_K$$
    is an isomorphism.
\end{Lemma}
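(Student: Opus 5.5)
The plan is to compare the two $K$-linear Tannakian categories directly. On one side is the base change $\mathcal{C}^{NF}(X)_{(K)}$, whose Tannaka group over $K$ is $\pi^S(X,x)_K$. On the other is $\mathcal{C}^{NF}(X_K)$. The base change functor $E\mapsto E\otimes_k K$ lands in $\mathcal{C}^{NF}(X_K)$ by Lemma~\ref{iffnumericallyflat}. I would show that the induced homomorphism $\pi^S(X_K,x_K)\to\pi^S(X,x)_K$ is both faithfully flat and a closed immersion, using the $K$-linear version of Theorem~\ref{Thm1}.

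\textbf{Full faithfulness.} This follows from flat base change:
\[
\Hom_{X_K}(E\otimes_k K,F\otimes_k K)=H^0(X,\mathcal{H}om(E,F))\otimes_k K.
\]

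\textbf{Reduction to finitely generated $K$.} Any object or morphism of $\mathcal{C}^{NF}(X_K)$ is defined over a finitely generated subextension $L\subseteq K$. The question therefore reduces to $K/k$ finitely generated and separable. Such an extension has a separating transcendence basis, so it factors as $k\subseteq k(t_1,\dots,t_n)\subseteq K$ with the second step finite separable. I would treat the two steps separately.

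\textbf{Finite separable step.} Let $p:X_K\to X$ be the projection. For the closed immersion, take $F\in\mathcal{C}^{NF}(X_K)$. Then $p_*F$ is a vector bundle on $X$. After passing to a Galois closure $K'$, the pullback $(p^*p_*F)_{K'}$ is the direct sum of the Galois conjugates of $F_{K'}$. It is therefore numerically flat, and Lemma~\ref{iffnumericallyflat} gives $p_*F\in\mathcal{C}^{NF}(X)$. Moreover, $F$ is a quotient of $p^*p_*F$, which is the subquotient condition of Theorem~\ref{Thm1}(2).

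For faithful flatness, I would check that subobjects of $E\otimes_kK$ come from subobjects of $E$, up to the $K$-linear structure, i.e. in $\mathcal{C}^{NF}(X)_{(K)}$. The route is through the socle. The socle of $E_{K'}$ in $\mathcal{C}^{NF}(X_{K'})$ is $\mathrm{Gal}(K'/k)$-stable, so by Galois descent it comes from a subobject of $E$. Inducting along the socle filtration then reduces the claim to semisimple objects. In the semisimple case, full faithfulness identifies subobjects with idempotents of $\mathrm{End}(E)\otimes K$, and these clearly come from the base-changed category.

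\textbf{Purely transcendental step.} This is $K=k(t_1,\dots,t_n)$, and I expect it to be the main obstacle. A numerically flat bundle $F$ on $X_K$, together with any subobject of a base change $E_K$, spreads out to a family over an open $U\subseteq\mathbb{A}^n_k$. The plan is to show the family is isotrivial. I would try to do this by exploiting that numerically flat bundles of fixed rank with given Jordan--Hölder factors are rigid enough, for instance because $H^1$ of the relevant $\mathcal{H}om$ bundles is finite-dimensional. In particular, if $U(k)\neq\emptyset$, specialising at a $k$-point should identify $F$ with $E\otimes_kK$ for some $E\in\mathcal{C}^{NF}(X)$.

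When $k$ is finite, $U$ may have no $k$-points. I would then first pass to a finite separable extension of $k$ over which $U$ acquires a rational point, and descend back using the finite separable step already treated. Making this rigidity/specialisation argument precise, particularly for extension data rather than just semisimple objects, is where I expect the real work to lie.
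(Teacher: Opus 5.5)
There is a genuine gap, and it is in the step you flagged. The purely transcendental case cannot be proved, because for transcendental $K$ the statement is false.

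Your mechanism already fails for unipotent bundles. Let $c_1,c_2\in H^1(X,\mathcal{O}_X)$ be linearly independent and $K=k(t)$. The non-split extension of $\mathcal{O}_{X_K}$ by $\mathcal{O}_{X_K}$ with class $c_1+tc_2$ is not isomorphic to any $E\otimes_kK$: such a bundle determines its class up to $K^{\times}$, and the line $K(c_1+tc_2)$ is not defined over $k$. Its specialisations at $t=a\in k$ are pairwise non-isomorphic, so the spread-out family is not isotrivial, and finite-dimensionality of $H^1$ gives no rigidity. That bundle is still a subquotient of a base change (by a Baer sum).

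Irreducible objects, however, give an honest counterexample to the lemma itself. Let $k$ be algebraically closed, $X$ an elliptic curve with origin $x$, and $K=k(X)$, which is separable over $k$. Let $\eta\in X(K)$ be the generic point and $L=\mathcal{O}_{X_K}(\eta-x_K)$. Then $L$ is numerically flat of rank one, hence irreducible in $\mathcal{C}^{NF}(X_K)$.

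Every $F\in\mathcal{C}^{NF}(X)$ is semistable of degree $0$. By Atiyah it is filtered with quotients in $\mathrm{Pic}^0(X)$. By Jordan--H\"older, every irreducible subquotient of $F\otimes_kK$ is therefore $\mathcal{O}_{X_K}(m-x_K)$ for some $m\in X(k)$. $L$ is not of this form, since $\eta$ is not a $k$-point. Hence $L$ is not a subquotient of any $F\otimes_kK$, and by Theorem~\ref{Thm1}(2) the map $\pi^S(X_K,x_K)\to\pi^S(X,x)_K$ is not a closed immersion.

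Now choose $t\in K$ with $K/k(t)$ finite separable. Your own finite step, composed with the two base-change maps, then shows the lemma already fails for $k(t)/k$. So the route ``reduce to finitely generated, then to $k(t_1,\dots,t_n)$'' is a dead end.

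The lemma must therefore be read with $K/k$ separable \emph{algebraic}. This is the only case the paper needs: it applies the lemma to $\bar k/k$ with $k$ perfect, and gives no proof of its own, citing \cite{LiTiBa26}. In that case your limit argument reduces everything to finite separable $K/k$, and your finite step essentially finishes the proof.

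The finite step can also be shortened. For $[K:k]<\infty$, the base-changed category consists of objects of $\mathcal{C}^{NF}(X)$ equipped with a $K$-action. You have already shown $p_*F\in\mathcal{C}^{NF}(X)$ for every $F\in\mathcal{C}^{NF}(X_K)$. So $F\mapsto p_*F$, with its $K$-action, is a quasi-inverse of the base-change functor. It is compatible with the fibre functors, since $p^{-1}(x)=\{x_K\}$, and the isomorphism follows at once.

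This shortcut also avoids your socle induction, which as written is incomplete. Knowing that $F\cap S_K$ and $F/(F\cap S_K)$ lie in the essential image does not put $F$ there: the essential image of a fully faithful exact functor is not automatically closed under extensions. That induction would need an extra splitting argument.
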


\begin{Lemma}[{\cite[Lemma~4.5 \& Proposition~5.4]{LiTiKu26}}]\label{pullbackall}
    Let $k$ be a field, $f:X\rightarrow S$ a morphism of geometrically reduced connected schemes proper over $k$. Then for any $E\in\mathcal{C}^{*}(S)$, we have $f^*E\in\mathcal{C}^{*}(X)$ where $*\in\{S,N,EN,F, EF,Loc,ELoc,\acute{e}t,E\acute{e}t,uni\}$.
\end{Lemma}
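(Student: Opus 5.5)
The plan is to treat the statement in two layers: first the base categories $\mathcal{C}^{*}$ for $*\in\{S,N,F,Loc,\acute{e}t,uni\}$, then their saturations for $*\in\{EN,EF,ELoc,E\acute{e}t\}$. For the base categories, the main input is the argument in \cite[Lemma~4.5 \& Proposition~5.4]{LiTiKu26}. We take $E\in\mathcal{C}^{*}(S)$ and check that $f^*E$ still satisfies the defining property on $X$.

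\emph{Numerically flat.} Nefness of $E$ and $E^\vee$ is preserved by pullback, because a curve mapping to $X$ composes to a curve mapping to $S$, and $f^*(E^\vee)\cong (f^*E)^\vee$.

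\emph{Finite.} An isomorphism $f(E)\cong g(E)$ pulls back to $f(f^*E)\cong g(f^*E)$, since $f^*$ commutes with $\oplus$ and $\otimes$.

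\emph{Essentially finite.} A presentation $E_1\hookrightarrow E_2\hookrightarrow F$ with $E_1,E_2$ numerically flat, $F$ finite and $E\cong E_2/E_1$ pulls back to a presentation of the same kind, because $f^*$ is exact on vector bundles.

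\emph{Frobenius finite and Frobenius trivial.} Absolute Frobenius commutes with every morphism, $F_S\circ f=f\circ F_X$, so $(F_X^i)^*f^*E\cong f^*(F_S^i)^*E$. Hence the relation $\tilde f(E)\cong\tilde g(E)$ and the triviality $F_S^{n*}E\cong\mathcal{O}^{\oplus r}$ both pull back.

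\emph{\'Etale trivializable.} If $\phi:P\to S$ is finite \'etale with $\phi^*E$ trivial, then $P\times_S X\to X$ is finite \'etale and trivializes $f^*E$.

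\emph{Unipotent.} A filtration with graded pieces $\mathcal{O}_S$ pulls back to a filtration with graded pieces $\mathcal{O}_X$.

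For the saturated categories, we take $E\in\overline{\mathcal{C}^{*}(S)}$ with filtration $0\hookrightarrow E_1\hookrightarrow\cdots\hookrightarrow E_n=E$ whose graded pieces lie in $\mathcal{C}^*(S)$. Pulling back is exact on short exact sequences of vector bundles, so $f^*E_i$ is a filtration of $f^*E$ by subbundles. Its graded pieces are $f^*(E_{i+1}/E_i)$, which lie in $\mathcal{C}^*(X)$ by the first layer. Therefore $f^*E\in\overline{\mathcal{C}^*(X)}$; this is exactly \cite[Lemma~4.5]{LiTiKu26}, also recalled in the Remark after the definition of saturation.

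The main obstacle should be the essentially finite and essentially Frobenius finite cases. Subobjects and quotients here are taken inside the numerically flat category, so we must check that $f^*E_1\hookrightarrow f^*E_2$ are still inclusions of subbundles and that $f^*E_2/f^*E_1\cong f^*E$. Both follow because pullback of a locally split inclusion of vector bundles stays locally split. Geometric reducedness of $X$ is needed only so that the target categories are Tannakian; it plays no role in the pullback argument itself.
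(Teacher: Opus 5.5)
Your proposal is correct and takes the same route as the source the paper cites in place of a proof. You check each base category directly from its defining property, and you handle the saturated ones by pulling back filtrations built from short exact sequences of vector bundles, which is exactly \cite[Lemma~4.5]{LiTiKu26} as recalled in the Remark after the definition of saturation. The one step to make explicit is why $E_2\hookrightarrow F$ is a subbundle: $F$ is finite, hence numerically flat, and an injective map between numerically flat bundles on the reduced connected scheme $S$ has locally free cokernel, so reducedness of $S$ is used there to keep $f^*E_2\to f^*F$ injective.
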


\begin{Proposition}\label{isodescend}
    Let $k$ be a field, $f:X\rightarrow S$ a morphism of geometrically reduced connected schemes proper over $k$, $x\in X(k)$ lying over $s\in S(k)$. If the induced homomorphism $\pi^S(X,x)\rightarrow \pi^S(S,s)$ is an isomorphism, then
    \begin{enumerate}
        \item The induced homomorphism $\pi^N(X,x)\rightarrow \pi^N(S,s)$ is an isomorphism.
        \item The induced homomorphism $\pi^{EN}(X,x)\rightarrow \pi^{EN}(S,s)$ is an isomorphism.
        \item The induced homomorphism $\pi^{\acute{e}t}(X,x)\rightarrow \pi^{\acute{e}t}(S,s)$ is an isomorphism.
        \item The induced homomorphism $\pi^{E\acute{e}t}(X,x)\rightarrow \pi^{E\acute{e}t}(S,s)$ is an isomorphism.
        \item The induced homomorphism $\pi^{uni}(X,x)\rightarrow \pi^{uni}(S,s)$ is an isomorphism.
        \item If $\chara k>0$, then the induced homomorphism $\pi^{F}(X,x)\rightarrow \pi^{F}(S,s)$ is an isomorphism.
        \item If $\chara k>0$, then the induced homomorphism $\pi^{EF}(X,x)\rightarrow \pi^{EF}(S,s)$ is an isomorphism.
        \item If $\chara k>0$, then the induced homomorphism $\pi^{Loc}(X,x)\rightarrow \pi^{Loc}(S,s)$ is an isomorphism.
        \item If $\chara k>0$, then the induced homomorphism $\pi^{ELoc}(X,x)\rightarrow \pi^{ELoc}(S,s)$ is an isomorphism.
    \end{enumerate}
\end{Proposition}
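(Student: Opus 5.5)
The plan is to apply Proposition~\ref{isosubcate} with $\mathcal{C}'=\mathcal{C}^{NF}(S)$, $\mathcal{D}'=\mathcal{C}^{NF}(X)$ and $\Phi'=f^*$. The hypothesis says $\pi^S(X,x)\to\pi^S(S,s)$ is an isomorphism, which is exactly the input Proposition~\ref{isosubcate} requires. By Remark~\ref{tannakianrelation}, each $\mathcal{C}^*(S)$ and $\overline{\mathcal{C}^*(S)}$ for $*\in\{N,F,Loc,\acute{e}t,uni\}$ is a Tannakian subcategory of $\mathcal{C}^{NF}(S)$, and likewise on $X$. By Lemma~\ref{pullbackall}, $f^*$ carries $\mathcal{C}^*(S)$ into $\mathcal{C}^*(X)$. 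So for each $*$ it suffices to verify condition (3) of Proposition~\ref{isosubcate}: if $E'\in\mathcal{C}^{NF}(S)$ and $f^*E'\in\mathcal{C}^*(X)$, then $E'\in\mathcal{C}^*(S)$. Equivalently, it suffices to verify condition (1), that $\pi^*(X,x)\to\pi^*(S,s)$ is a closed immersion.

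The cleanest route to condition (1) is intrinsic descriptions of each $\pi^*$ as a quotient of $\pi^S$.
\begin{itemize}
\item $\pi^N$ is the maximal profinite quotient of $\pi^S$.
\item $\pi^{\acute{e}t}$ is the maximal pro-étale quotient.
\item $\pi^{uni}$ is the maximal unipotent quotient.
\item $\pi^{Loc}$ is the maximal pro-local (infinitesimal) quotient.
\item $\pi^F$ corresponds to the Frobenius-finite objects.
\end{itemize}
I would argue directly with bundles instead. Take $E'\in\mathcal{C}^{NF}(S)$ with $f^*E'\in\mathcal{C}^*(X)$. Since $f^*\colon\mathcal{C}^{NF}(S)\to\mathcal{C}^{NF}(X)$ is an equivalence, finiteness transfers. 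Concretely, $f^*$ is fully faithful and essentially surjective, so an isomorphism $f(f^*E)\cong g(f^*E)$ descends to $f(E)\cong g(E)$, because $f^*$ commutes with $\oplus$ and $\otimes$. The same argument works for Frobenius finiteness, since $f^*F_S^{i*}=F_X^{i*}f^*$. Frobenius triviality descends the same way: $F_S^{n*}E'$ and $\mathcal{O}_S^{\oplus r}$ are numerically flat with isomorphic pullbacks, hence are isomorphic.

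The essential versions and the subquotient closures are handled by the equivalence. Subobjects of $f^*F$ inside $\mathcal{C}^{NF}(X)$ come from subobjects of $F$, by Theorem~\ref{Thm1}(1) applied to the isomorphism. Hence the diagram $E_1\hookrightarrow E_2\hookrightarrow F$ descends for $N$ and $F$. For unipotent bundles, the filtration with quotients $\mathcal{O}_X$ descends in the same way. For the saturated versions $EN$, $EF$ and $ELoc$, a filtration of $f^*E'$ with graded pieces in $\mathcal{C}^*(X)$ descends to a filtration of $E'$ in $\mathcal{C}^{NF}(S)$. Each graded piece of that filtration lies in $\mathcal{C}^*(S)$ by the non-extended case just proved. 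This is essentially the argument of Proposition~\ref{isoSerre} with $\mathcal{C}'=\mathcal{C}^{NF}$.

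The main obstacle is the étale case, since étale trivializability is not a purely tensor-categorical condition on $\Vect$. I would use the characterization of $\mathcal{C}^{\acute{e}t}$ as the finite bundles whose monodromy group scheme is étale, or equivalently as the finite bundles fixed under Frobenius pullback up to the relevant Tannakian criterion. Since $f^*$ induces an isomorphism $\langle E'\rangle\simeq\langle f^*E'\rangle$, the monodromy group schemes of $E'$ and $f^*E'$ coincide. Étaleness of this group then transfers, giving $E'\in\mathcal{C}^{\acute{e}t}(S)$. The case $E\acute{e}t$ then follows by the saturation argument above.
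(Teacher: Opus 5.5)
Your proposal is correct and is essentially the paper's proof: you reduce each case to condition (3) of Proposition~\ref{isosubcate} using that $f^*\colon \mathcal{C}^{NF}(S)\to\mathcal{C}^{NF}(X)$ is an equivalence, descend finiteness, Frobenius finiteness, Frobenius triviality and unipotent filtrations by full faithfulness, treat the \'etale case by identifying the monodromy group schemes of $E'$ and $f^*E'$, and obtain the extended versions by descending filtrations, which is exactly what the paper packages as Proposition~\ref{isoSerre}. You should drop the aside describing \'etale trivializable bundles as finite bundles ``fixed under Frobenius pullback'': this is not an equivalent characterization in general and makes no sense in characteristic $0$, but your argument never uses it.
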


\begin{proof}
    (1) Since the induced homomorphism $\pi^S(X,x)\rightarrow \pi^S(S,s)$ is an isomorphism, for any $E\in\mathcal{C}^{N}(X)$, there exists $F\in\mathcal{C}^{NF}(S)$ such that $E\cong f^*F$. 
    
    Suppose $E$ is a finite bundle on $X$, then there exist distinct $g(t),h(t)\in\mathbb{N}[t]$ such that $f^*g(F)\cong g(E)\cong h(E)\cong f^*h(F)$. Since the functor $f^*:\mathcal{C}^{NF}(S)\rightarrow \mathcal{C}^{NF}(X)$ is fully faithful, there exists an isomorphism $g(F)\cong h(F)$. So $F$ is a finite bundle on $S$.

    Suppose $E\in \mathcal{C}^{N}(X)$, then $E\cong E_1/E_2$, where $E_2\hookrightarrow E_1\hookrightarrow E'$, $E_1,E_2\in\mathcal{C}^{NF}(X)$ and $E'$ is a finite bundle on $X$. Since the functor $f^*:\mathcal{C}^{NF}(S)\rightarrow \mathcal{C}^{NF}(X)$ is fully faithful, there exists a filtration $F_{2}\hookrightarrow F_{1}\hookrightarrow F'$ in $\mathcal{C}^{NF}(S)$ with $F'$ being a finite bundle on $S$ such that $E'\cong f^*F'$ and $E_i\cong f^*F_i$, $i=1,2$. It follows that $F\cong F_1/F_2\in\mathcal{C}^N(S)$. 
    
    Hence the induced homomorphism $\pi^{N}(X,x)\rightarrow \pi^{N}(S,s)$ is an isomorphism by Proposition~\ref{isosubcate}.

    (2) It follows by (1) and Proposition~\ref{isoSerre}.
    
    (3) Since the induced homomorphism $\pi^S(X,x)\rightarrow \pi^S(S,s)$ is an isomorphism, for any $E\in\mathcal{C}^{\acute{e}t}(X)$, there exists $F\in\mathcal{C}^{NF}(S)$ such that $E\cong f^*F$. Consider the following commutative diagram
    \[\begin{tikzcd}
        \mathcal{C}^{NF}(S)\arrow[r,"f^*"]&\mathcal{C}^{NF}(X)\\
        \langle F \rangle_{\mathcal{C}^{NF}(S)}\arrow[r,"f^*"]\arrow[u,hook]&\langle f^*F \rangle_{\mathcal{C}^{NF}(X)}\arrow[u,hook]
    \end{tikzcd}\quad\quad\begin{tikzcd}
        \pi^{S}(X,x)\arrow[r,"\cong"]\arrow[d,two heads]&\pi^{S}(S,s)\arrow[d,two heads]\\
        \pi(\langle f^*F \rangle_{\mathcal{C}^{NF}(X)},x)\arrow[r]&\pi(\langle F \rangle_{\mathcal{C}^{NF}(S)},s)
    \end{tikzcd}\]
    It follows that the homomorphism $\pi(\langle f^*F \rangle_{\mathcal{C}^{NF}(X)},x)\twoheadrightarrow \pi(\langle F \rangle_{\mathcal{C}^{NF}(S)},s)$ induced by $f^*$ is faithfully flat. Since $f^*F\cong E\in\mathcal{C}^{\acute{e}t}(X)$, $\pi(\langle F \rangle_{\mathcal{C}^{\acute{e}t}(S)},s)$ is an \'etale finite group scheme. Then we obtain $F \in\mathcal{C}^{\acute{e}t}(S)$. Hence the induced homomorphism $\pi^{\acute{e}t}(X,x)\rightarrow \pi^{\acute{e}t}(S,s)$ is an isomorphism by Proposition~\ref{isosubcate}.

    (4) It follows by (3) and Proposition~\ref{isoSerre}.

    (5) For any $E\in\mathcal{C}^{uni}(X)$, there exists a filtration on $X$:
    $0=E_{0}\hookrightarrow E_{1}\hookrightarrow \cdots\hookrightarrow E_{n}=E$, 
    where $E_{{i+1}}/E_{{i}}\cong \mathcal{O}_X$ for any $0\leq i\leq n-1$. Note that $E_{i}\in\mathcal{C}^{NF}(X)$ for any $0\leq i\leq n$. Since the induced homomorphism $\pi^S(X,x)\rightarrow \pi^S(S,s)$ is an isomorphism, there exists a filtration in $\mathcal{C}^{NF}(S)$:
    $$0=F_{0}\hookrightarrow F_{1}\hookrightarrow \cdots\hookrightarrow F_{n}=F$$
    such that $E_{{i}}\cong f^*F_i$ and $E_{i+1}/E_i\cong f^*(F_{i+1}/F_i)\cong \mathcal{O}_X$ for any $i$. Since the functor $f^*:\mathcal{C}^{NF}(S)\rightarrow \mathcal{C}^{NF}(X)$ is fully faithful, we have $F_{i+1}/F_i\cong \mathcal{O}_S$ for any $0\leq i\leq n-1$. It follows that $F\in\mathcal{C}^{uni}(S)$. Hence the induced homomorphism $\pi^{uni}(X,x)\rightarrow \pi^{uni}(S,s)$ is an isomorphism by Proposition~\ref{isosubcate}.
    
    (6) Since the induced homomorphism $\pi^S(X,x)\rightarrow \pi^S(S,s)$ is an isomorphism, for any $E\in\mathcal{C}^{F}(X)$, there exists $F\in\mathcal{C}^{NF}(S)$ such that $E\cong f^*F$. 
    
    Suppose $E\in FF(X)$, then there exist integers $m>n>0$ such that $F_X^{m*}E\cong F_X^{n*}E$. Since the functor $f^*:\mathcal{C}^{NF}(S)\rightarrow \mathcal{C}^{NF}(X)$ is fully faithful, there exists an isomorphism $F_S^{m*}F\cong F_S^{n*}F$. So $F\in FF(S)$.

    Suppose $E\in \mathcal{C}^{F}(X)$, then $E\cong E_1/E_2$, where $E_2\hookrightarrow E_1\hookrightarrow E'$, $E_1,E_2\in\mathcal{C}^{NF}(X)$ and $E'\in FF(X)$. Since the functor $f^*:\mathcal{C}^{NF}(S)\rightarrow \mathcal{C}^{NF}(X)$ is fully faithful, there exists a filtration $F_{2}\hookrightarrow F_{1}\hookrightarrow F'$ in $\mathcal{C}^{NF}(S)$ with $F'\in FF(S)$ such that $E'\cong f^*F'$ and $E_i\cong f^*F_i$, $i=1,2$. Then $F\cong F_1/F_2\in\mathcal{C}^F(S)$. 
    
    Hence the induced homomorphism $\pi^{F}(X,x)\rightarrow \pi^{F}(S,s)$ is an isomorphism by Proposition~\ref{isosubcate}.

    (7) It follows by (6) and Proposition~\ref{isoSerre}.
    
    (8) For any $E\in\mathcal{C}^{Loc}(X)$ of rank $r$, there exists an integer $n>0$ such that $F_X^{n*}E\cong \mathcal{O}_X^{\oplus r}$. Since the induced homomorphism $\pi^S(X,x)\rightarrow \pi^S(S,s)$ is an isomorphism, there exists $F\in\mathcal{C}^{NF}(S)$ such that $E\cong f^*F$ and $F_X^{n*} E\cong f^*(F_S^{n*}F)\cong \mathcal{O}_X^{\oplus r}$. Since the functor $f^*:\mathcal{C}^{NF}(S)\rightarrow \mathcal{C}^{NF}(X)$ is fully faithful, then $F_S^{n*}F\cong \mathcal{O}_S^{\oplus r}$, i.e. $F\in \mathcal{C}^{Loc}(S)$. Hence the induced homomorphism $\pi^{Loc}(X,x)\rightarrow \pi^{Loc}(S,s)$ is an isomorphism by Proposition~\ref{isosubcate}.

    (9) It follows by (8) and Proposition~\ref{isoSerre}.
\end{proof}

\subsection{The case of characteristic 0}

\begin{Lemma}[{\cite[Theorem~11.4]{Lan11}}]\label{Lancomplex}
    Let $X$ be a complex projective manifold of dimension $d$, $D\subsetneq X$ a smooth ample effective divisor, $x\in D(k)$. Then
    \begin{enumerate}
        \item If $d\geq 2$, then the induced homomorphism $\pi^S(D,x)\rightarrow \pi^S(X,x)$ is faithfully flat.
        \item If $d\geq 3$, then the induced homomorphism $\pi^S(D,x)\rightarrow \pi^S(X,x)$ is an isomorphism.
    \end{enumerate}
\end{Lemma}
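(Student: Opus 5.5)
The plan is to prove both parts with the Tannakian criteria of Section~3, applied to $\mathcal{C}_X=\mathcal{C}^{NF}(X)$ and $\mathcal{C}_D=\mathcal{C}^{NF}(D)$. These categories are saturated by Remark~\ref{tannakianrelation}, so $\overline{\mathcal{C}}_X=\mathcal{C}_X$ and $\overline{\mathcal{C}}_D=\mathcal{C}_D$. The geometric input comes from two sources over $\mathbb{C}$.

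First, the description of numerically flat bundles on a complex projective manifold, following Demailly--Peternell--Schneider, Simpson and Uhlenbeck--Yau. Such a bundle is exactly one admitting a filtration whose graded pieces are hermitian flat. In particular the irreducible objects of $\mathcal{C}^{NF}(X)$ are the stable bundles with vanishing Chern classes, i.e.\ the bundles attached to irreducible unitary representations of $\pi_1^{top}(X)$.

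Second, the topological Lefschetz theorem. The map $\pi_1^{top}(D)\to\pi_1^{top}(X)$ is surjective for $d\geq 2$ and an isomorphism for $d\geq 3$.

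\emph{Part (1).} I would verify condition (1) of Corollary~\ref{H01generalsur}.

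\textbf{Vanishing.} For a hermitian flat bundle $E$ and the ample line bundle $L=\mathcal{O}_X(D)$, Serre duality gives $H^i(X,E\otimes L^{-1})\cong H^{d-i}(X,K_X\otimes E^\vee\otimes L)^\vee$. Here $E^\vee$ is again hermitian flat, hence Nakano semipositive, so Kodaira--Nakano vanishing kills the right-hand side for $d-i>0$. Thus $H^i(X,E(-D))=0$ for all $i<d$. Proposition~\ref{Hisaturated} then extends this vanishing to every numerically flat $E$, giving $H^0=H^1=0$ as soon as $d\geq 2$.

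\textbf{Irreducibility.} Let $E$ be irreducible in $\mathcal{C}^{NF}(X)$, attached to an irreducible unitary representation $\rho$ of $\pi_1^{top}(X)$. Then $E|_D$ is the flat bundle attached to $\rho$ composed with the surjection $\pi_1^{top}(D)\to\pi_1^{top}(X)$. That composite is still irreducible and unitary, so $E|_D$ is stable with vanishing Chern classes, hence irreducible in $\mathcal{C}^{NF}(D)$.

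Corollary~\ref{H01generalsur} now gives faithful flatness.

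\emph{Part (2).} By part (1), it suffices to show the map is a closed immersion, using Theorem~\ref{Thm1}(2). I would in fact show that every $E_D\in\mathcal{C}^{NF}(D)$ is a restriction $E|_D$ with $E\in\mathcal{C}^{NF}(X)$, arguing by induction on the length of a filtration of $E_D$ with hermitian flat graded pieces.

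\textbf{Base case.} Each graded piece comes from a unitary representation of $\pi_1^{top}(D)\cong\pi_1^{top}(X)$. It therefore equals the restriction of the hermitian flat bundle on $X$ attached to the same representation, which is numerically flat.

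\textbf{Inductive step.} This is the argument of Proposition~\ref{saturated}. The vanishing above with $i=2<d$ gives $H^2(X,\mathcal{H}om(E^1,E_1)(-D))=0$. Hence $\Ext^1_X(E^1,E_1)\to\Ext^1_D(E^1|_D,E_1|_D)$ is surjective, and the extension on $D$ lifts to an extension on $X$. That extension is numerically flat because $\mathcal{C}^{NF}(X)$ is saturated.

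The main obstacle I expect is the analytic identification: the irreducible numerically flat bundles must be exactly the unitary flat ones, and every numerically flat bundle must admit a filtration with hermitian flat graded pieces, so that both the Kodaira--Nakano vanishing and the topological Lefschetz theorem can be applied.
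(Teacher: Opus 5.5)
Your proof is correct, and it is not the paper's route, because the paper has no route of its own here. The statement is simply imported from Langer [Lan11, Theorem~11.4], so there is no internal argument to compare with.

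You instead derive it inside the paper's Section~3 framework from classical transcendental inputs:
\begin{itemize}
\item the Demailly--Peternell--Schneider filtration, which makes $\mathcal{C}^{NF}(X)$ the saturation of the category of hermitian flat bundles;
\item Nakano vanishing for a unitary flat bundle twisted by $L=\mathcal{O}_X(D)$;
\item the fact that irreducible unitary flat bundles are irreducible numerically flat objects;
\item the topological Lefschetz theorem.
\end{itemize}
Seen this way, part (1) is Corollary~\ref{H01generalsur}. Part (2) is Proposition~\ref{saturated} applied to the Tannakian category of hermitian flat bundles, whose saturation is $\mathcal{C}^{NF}$. For $d\geq 3$, restriction to $D$ is an equivalence on that category because $\pi_1^{\mathrm{top}}(D)\cong\pi_1^{\mathrm{top}}(X)$. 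So the only extension problem is lifting extension classes, which is controlled by $H^2$-vanishing. No formal-neighbourhood or $\GLF(D)$ argument (Proposition~\ref{Lefschetzclosedimmersion}) is needed.

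The citation buys brevity. Your route buys a self-contained proof and shows that the paper's own criteria recover Langer's complex result.

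Two small points to tighten:
\begin{itemize}
\item Nakano positivity should be asserted for $E^\vee\otimes L$, whose curvature is $\omega\otimes\mathrm{Id}$, not for $E^\vee$ itself.
\item The identification of irreducible objects needs a one-line justification. The first step of a DPS filtration is a numerically flat subobject, so an irreducible object of $\mathcal{C}^{NF}(X)$ is itself hermitian flat. Complete reducibility of unitary representations then makes it come from an irreducible unitary representation. On $D$, irreducibility of $E|_D$ follows either because holomorphic maps between hermitian flat bundles are parallel, or because numerically flat subbundles have degree $0$ and so cannot destabilize a stable bundle.
\end{itemize}
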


\begin{Proposition}\label{App0}
    Let $k$ be a field of characteristic $0$, $X$ a smooth projective variety over $k$ of dimension $d$, $D\subsetneq X$ a smooth ample effective divisor, $x\in D(k)$, $*\in\{S,\acute{e}t,E\acute{e}t,uni\}$. Then
    \begin{enumerate}
        \item If $d\geq 2$, then the induced homomorphism $\pi^*(D,x)\rightarrow \pi^{*}(X,x)$ is faithfully flat.
        \item If $d\geq 3$, then the induced homomorphism $\pi^*(D,x)\rightarrow \pi^*(X,x)$ is an isomorphism.
    \end{enumerate}
\end{Proposition}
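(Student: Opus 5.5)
Our plan is to reduce to the complex case of Lemma~\ref{Lancomplex} for $*=S$, and then derive the other cases from the $S$ case by the descent results of the previous section.

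\textbf{Step 1: the case $*=S$.} The varieties $X$, $D$, the point $x$ and the finitely many equations defining them are all defined over a finitely generated subfield $k_0\subseteq k$. Hence $X\cong X_0\otimes_{k_0}k$ and $D\cong D_0\otimes_{k_0}k$, where $X_0$ is smooth projective, $D_0\subsetneq X_0$ is a smooth ample divisor, and $x_0\in D_0(k_0)$. Choose an embedding $k_0\hookrightarrow\mathbb{C}$. All field extensions here are in characteristic $0$, so they are separable. By Lemma~\ref{LiTianbasechange}, we get compatible isomorphisms
\[
\pi^S(X_K,x_K)\cong\pi^S(X_0,x_0)_K,\qquad \pi^S(D_K,x_K)\cong\pi^S(D_0,x_0)_K
\]
for $K=k$ and for $K=\mathbb{C}$. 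We need these to commute with the restriction homomorphism $\pi^S(D,x)\to\pi^S(X,x)$. This follows because the base-change maps are induced by pullback along $X_K\to X_0$, and pullback commutes with restriction to $D$.

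Applying Lemma~\ref{isofield} twice, the homomorphism over $k$ is faithfully flat (resp. an isomorphism) iff the one over $k_0$ is, iff the one over $\mathbb{C}$ is. Over $\mathbb{C}$ the statement is Lemma~\ref{Lancomplex}, which gives (1) for $d\ge2$ and (2) for $d\ge3$. We also check that $X_{\mathbb{C}}$ remains connected. This holds because $X$ is a variety, hence geometrically connected after the usual reduction, since $H^0(\mathcal{O})$ is preserved.

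\textbf{Step 2: $*\in\{\acute{e}t,E\acute{e}t,uni\}$ when $d\ge3$.} Apply Proposition~\ref{isodescend} with $f:D\hookrightarrow X$ and $S=X$. Its hypothesis, that $\pi^S(D,x)\to\pi^S(X,x)$ is an isomorphism, is Step 1. Items (3), (4) and (5) of that proposition then give the isomorphisms for $\acute{e}t$, $E\acute{e}t$ and $uni$.

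\textbf{Step 3: faithful flatness for $d\ge2$.} Proposition~\ref{isodescend} only handles isomorphisms, so this case needs a separate argument. Each $\mathcal{C}^*(X)$ is a Tannakian subcategory of $\mathcal{C}^{NF}(X)$ by Remark~\ref{tannakianrelation}. This gives a commutative square with vertical surjections $\pi^S(D,x)\twoheadrightarrow\pi^*(D,x)$ and $\pi^S(X,x)\twoheadrightarrow\pi^*(X,x)$. The top map $\pi^S(D,x)\to\pi^S(X,x)$ is faithfully flat by Step 1. The composite $\pi^S(D,x)\to\pi^*(X,x)$ is therefore faithfully flat, and it factors through $\pi^*(D,x)$. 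Hence $\pi^*(D,x)\to\pi^*(X,x)$ is faithfully flat, since a homomorphism of affine group schemes through which a faithfully flat map factors is itself faithfully flat. This is the same argument as at the end of Corollary~\ref{H01generalsur}.

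The main obstacle is Step 1, specifically verifying that the base-change isomorphisms of Lemma~\ref{LiTianbasechange} are compatible with restriction. Everything else is formal.
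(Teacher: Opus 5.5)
Your proposal is correct and follows the paper's proof: you reduce the $S$ case to Langer's complex theorem via Lemma~\ref{LiTianbasechange} and Lemma~\ref{isofield}, obtain the isomorphisms for $\acute{e}t$, $E\acute{e}t$, $uni$ from Proposition~\ref{isodescend}, and get faithful flatness by factoring through the surjections $\pi^S\twoheadrightarrow\pi^*$. Your descent to a finitely generated $k_0\hookrightarrow\mathbb{C}$ just makes explicit the ``Lefschetz principle'' that the paper invokes over $\bar{k}$, and running the factorization argument over $k$ rather than over $\bar{k}$ makes no difference.
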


\begin{proof}
    (1) By Lemma~\ref{LiTianbasechange}, Lemma~\ref{Lancomplex} and Lefschetz principle, we have the following commutative diagram
    \[
        \begin{tikzcd}
        \pi^S(D_{\bar{k}},\bar{x})\arrow[r,two heads]\arrow[d,"\cong"]&\pi^S(X_{\bar{k}},\bar{x})\arrow[d,"\cong"]\\
        \pi^S(D,x)_{\bar{k}}\arrow[r,two heads]\arrow[d,two heads]&\pi^S(X,x)_{\bar{k}}\arrow[d,two heads]\\
            \pi^*(D,x)_{\bar{k}}\arrow[r]&\pi^*(X,x)_{\bar{k}}
        \end{tikzcd}
    \]
    It follows that $\pi^*(D,x)_{\bar{k}}\rightarrow\pi^*(X,x)_{\bar{k}}$ is faithfully flat. Hence the natural homomorphism $\pi^*(D,x)\rightarrow \pi^{*}(X,x)$ is faithfully flat by Lemma~\ref{isofield}.

    (2) By Lemma~\ref{LiTianbasechange}, Lemma~\ref{Lancomplex} and Lefschetz principle, we have the following commutative diagram
    \[
        \begin{tikzcd}
        \pi^S(D_{\bar{k}},\bar{x})\arrow[r,"\cong"]\arrow[d,"\cong"]&\pi^S(X_{\bar{k}},\bar{x})\arrow[d,"\cong"]\\
            \pi^S(D,x)_{\bar{k}}\arrow[r]&\pi^S(X,x)_{\bar{k}}
        \end{tikzcd}
    \]
    It follows that $\pi^S(D,x)_{\bar{k}}\rightarrow\pi^S(X,x)_{\bar{k}}$ is an isomorphism. Hence the natural homomorphism $\pi^S(D,x)\rightarrow \pi^{S}(X,x)$ is an isomorphism by Lemma~\ref{isofield}. Then by Proposition~\ref{isodescend}, it follows immediately.
\end{proof}

\subsection{The case of characteristic $p>0$}

\begin{Lemma}[{\cite[Theorem~10.4 \& Corollary~10.7]{Lan11}}]\label{Lan11}
    Let $k$ be an algebraically closed field of characteristic $p>0$, $X$ a smooth projective variety over $k$ of dimension $d\geq 3$, $H$ an ample divisor on $X$, $\alpha$ a nonnegative integer such that $T_X(\alpha H)$ is globally generated. Let $D\subsetneq X$ be a smooth ample effective divisor such that $D-\alpha H$ is ample and $DH^{d-1}> \max(p\alpha H^d,(d+1)\alpha H^d-K_X H^{d-1})$, $x\in D(k)$, $*\in\{S,N,\acute{e}t\}$. Then the induced homomorphism $\pi^*(D,x)\rightarrow \pi^*(X,x)$ is an isomorphism.
\end{Lemma}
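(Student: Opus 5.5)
The route is to verify the criterion of Corollary~\ref{Main?} for $\mathcal{C}_X=\mathcal{C}^{NF}(X)$ and $\mathcal{C}_D=\mathcal{C}^{NF}(D)$, which gives the case $*=S$. The cases $*=N$ and $*=\acute{e}t$ then follow formally from Proposition~\ref{isodescend}, applied with $f$ the inclusion $D\hookrightarrow X$. This is the natural plan within the paper's framework; in the paper itself the statement is quoted from \cite{Lan11}, and the real work is Langer's effective vanishing and restriction theorems. Since $X$ is smooth of dimension $d\geq 3$, it satisfies $S_2$, and $D$ is connected because it is ample. So the hypotheses of Corollary~\ref{Main?} hold, and three things must be checked: full faithfulness of $|_D$, preservation of irreducibility, and extension of objects of $\mathcal{C}^{NF}(D)$.

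\textbf{Step 1 (full faithfulness).} By Proposition~\ref{H01fullyfaithful}(2), it suffices to prove $H^i(X,E(-D))=0$ for $i=0,1$ and every numerically flat $E$. Since $\mathcal{C}^{NF}(X)$ is saturated and Proposition~\ref{Hisaturated} applies, we may reduce to $E$ stable. Numerically flat bundles have vanishing Chern classes and are strongly semistable, and the same holds for all Frobenius pullbacks $F_X^{n*}E$. The vanishing $H^0$ is then immediate. For $H^1(X,E(-D))$, I would run Langer's Frobenius argument: compare $H^1(E(-D))$ with $H^1(F_X^{n*}E(-p^nD))$, and control the kernels through the sheaf of differentials $\Omega_X$. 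Global generation of $T_X(\alpha H)$ says $\Omega_X\hookrightarrow \mathcal{O}_X(\alpha H)^{\oplus N}$, so each Frobenius step costs at most a twist by $\alpha H$. The numerical inequality $DH^{d-1}>\max\{p\alpha H^d,(d+1)\alpha H^d-K_XH^{d-1}\}$ is exactly what makes the twisted bundles $F_X^{n*}E(-p^nD+\alpha H\cdots)$ have negative slope. By Serre duality and Serre vanishing for large $n$, this forces the group to vanish.

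\textbf{Step 2 (irreducibility).} For $E$ stable and numerically flat, I would apply Langer's effective restriction theorem: for strongly semistable sheaves with $\Delta(E)H^{d-2}=0$, restriction to a smooth divisor in $|D|$ with $D-\alpha H$ ample and the degree bound above remains stable. It remains to note that for numerically flat bundles, stability in the slope sense coincides with irreducibility in $\mathcal{C}^{NF}$, because every subobject in $\mathcal{C}^{NF}$ has degree $0$.

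\textbf{Step 3 (extension; the main obstacle).} Given $F\in\mathcal{C}^{NF}(D)$, I would lift $F$ successively to the thickenings $nD$. The obstructions to lifting lie in $H^2(D,\mathcal{E}nd(F)(-nD))$, and the ambiguities in $H^1(D,\mathcal{E}nd(F)(-nD))$. Both groups should vanish, for all $n\geq 1$, by the same Frobenius and tangent-bundle estimate as in Step 1, now carried out on $D$ (using $d-1\geq 2$). This produces a bundle on the formal completion $\hat X$. Grothendieck's effective Lefschetz condition $\mathrm{Leff}(X,D)$ holds for ample $D$ with $d\geq 3$, so this formal bundle algebraizes to $(E_U,U)\in\GLF(D)$. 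Set $E={j_U}_*E_U$. It is reflexive and satisfies $E|_D\cong F$.

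Two things then remain for $E$. First, $E$ must be locally free; I would argue that its singular locus avoids the ample divisor $D$ and hence is empty. Second, $E$ must be numerically flat. Its Chern classes restrict to those of $F$ on $D$, and the restriction map on numerical classes is injective in low codimension by ampleness of $D$, so $c_1(E)=0$ and $c_2(E)H^{d-2}=0$. Semistability of $E$ follows from semistability of $E|_D$. Finally, a strongly semistable bundle with these vanishing Chern classes is numerically flat. I expect the uniform vanishing over all thickenings in the lifting step to be the hardest part, since this is where positive characteristic prevents a direct Kodaira-type argument and forces the use of the tangent-bundle hypothesis.
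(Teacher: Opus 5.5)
The paper gives no proof of this lemma; it is quoted from \cite{Lan11}. The paper's own work (Proposition~\ref{app+}) only descends it to perfect fields and to the other $*$ via Lemma~\ref{isofield} and Proposition~\ref{isodescend}. Your framing through Corollary~\ref{Main?} and Proposition~\ref{isodescend} is sensible, and Step~1 is essentially fine; the final vanishing uses boundedness of $\{F_X^{n*}E\}_n$.

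\textbf{Step~3 rests on a false vanishing.} The failure occurs exactly in the case $d=3$, which the lemma allows. There $D$ is a surface, so $H^2(D,\mathcal{E}nd(F)(-nD))$ is top-degree cohomology. By Serre duality it is dual to $H^0(D,\mathcal{E}nd(F)\otimes\omega_D(nD))$, which is nonzero for $n\gg0$ because $\mathcal{O}_D(D)$ is ample, already for $F=\mathcal{O}_D$. No Frobenius estimate can make these groups vanish.

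Concretely, take $X=\mathbb{P}^3$ with $\alpha=0$ (allowed, since $T_{\mathbb{P}^3}$ is globally generated). Let $D$ be a smooth quintic containing a line $\ell$, e.g.\ the Fermat quintic with $p\neq5$. Then $DH^2=5>4=-K_XH^2$, so every hypothesis holds. Yet $\mathcal{O}_D(\ell)$ does not lift to $\hat X$: by $\mathrm{Leff}(X,D)$ and reflexivity it would then extend to some $\mathcal{O}(a)$ on $\mathbb{P}^3$, impossible since $\ell\cdot H=1$. Hence some obstruction in $H^2(D,\mathcal{O}_D(-nD))=H^2(D,\mathcal{E}nd(\mathcal{O}_D)(-nD))$ is nonzero, and this is precisely the group you claim vanishes for the numerically flat bundle $\mathcal{O}_D$.

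So you cannot lift numerically flat bundles to $\hat X$ by vanishing of obstruction spaces. You would have to show that the particular obstruction classes of a numerically flat $F$ vanish, using something specific to numerically flat bundles. That is the substantive content of Langer's extension result, and your proposal does not supply it.

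Even for $d\geq4$, ``the same estimate as in Step~1'' does not give $H^2$-vanishing on $D$. The Frobenius d\'evissage for $H^1$ only needs $H^0$-vanishing of $\Omega$-twists. For $H^2$ you must control $H^1(\,\cdot\,\otimes B^1_D)$. Global generation of $T_X(\alpha H)$ gives no embedding of $\Omega_D$, which is only a quotient of $\Omega_X|_D$.

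\textbf{Two further problems in Step~3.} First, your local-freeness argument is invalid. $X\setminus U$ is a finite set of points, and a finite set can miss an ample divisor. So ``the singular locus avoids $D$, hence is empty'' proves nothing; on a threefold, reflexive sheaves genuinely have isolated singular points. What you need is an actual theorem: a strongly semistable reflexive sheaf with $c_1\cdot D^{d-1}=0$ and $\Delta\cdot D^{d-2}=0$ is locally free and numerically flat. Ampleness alone does not give this.

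Second, Step~2 is superfluous. If Step~3 really produced $E\in\mathcal{C}^{NF}(X)$ with $E|_D\cong F$, then together with Step~1 the functor $|_D$ would be an equivalence, and irreducibility would be automatic. You would not need a rank-uniform restriction theorem for stable but not strongly stable numerically flat bundles, and you have not pinned down where such a theorem comes from.
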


\begin{Proposition}\label{app+}
    Let $k$ be a perfect field of characteristic $p>0$, $X$ a smooth projective variety over $k$ of dimension $d\geq 3$, $H$ an ample divisor on $X$, $\alpha$ a nonnegative integer such that $T_X(\alpha H)$ is globally generated. Let $D\subsetneq X$ be a smooth ample effective divisor such that $D-\alpha H$ is ample and $DH^{d-1}> \max(p\alpha H^d,(d+1)\alpha H^d-K_X H^{d-1})$, $x\in D(k)$, $*\in\{S,N,EN,F, EF,Loc,ELoc,\acute{e}t,E\acute{e}t,uni\}$. Then the induced homomorphism $\pi^*(D,x)\rightarrow \pi^*(X,x)$ is an isomorphism.
\end{Proposition}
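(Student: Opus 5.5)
The plan is to reduce to the S-fundamental group over $\bar{k}$, exactly as in the proof of Proposition~\ref{App0}, and then descend to all other types via Proposition~\ref{isodescend}.

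\textbf{Base change.} Since $k$ is perfect, $\bar{k}/k$ is separable and $X$, $D$ are geometrically reduced (indeed smooth). Connectedness also survives base change: $X$ is geometrically connected because $H^0(X,\mathcal{O}_X)=k$ for a variety proper over $k$ with a $k$-point, and $D_{\bar{k}}$ is a smooth ample divisor, hence connected since $d\geq 2$. We check that all hypotheses of Lemma~\ref{Lan11} hold after base change to $\bar{k}$:
\begin{itemize}
\item $X_{\bar{k}}$ is a smooth projective variety of dimension $d$.
\item $D_{\bar{k}}$ is a smooth ample effective divisor.
\item $H_{\bar{k}}$ is ample.
\item $T_{X_{\bar{k}}}(\alpha H_{\bar{k}})=T_X(\alpha H)\otimes_k\bar{k}$ is globally generated, since global generation commutes with flat base change.
\item $D_{\bar{k}}-\alpha H_{\bar{k}}$ is ample.
\item The intersection numbers $DH^{d-1}$, $H^d$, $K_XH^{d-1}$ are unchanged, because $K_{X_{\bar{k}}}$ is the pullback of $K_X$.
\end{itemize}
Hence Lemma~\ref{Lan11} gives that $\pi^S(D_{\bar{k}},\bar{x})\to\pi^S(X_{\bar{k}},\bar{x})$ is an isomorphism.

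\textbf{Descent to $k$.} By Lemma~\ref{LiTianbasechange}, applied to both $D$ and $X$, we get a commutative square
\[
\pi^S(D_{\bar{k}},\bar{x})\cong\pi^S(D,x)_{\bar{k}} \longrightarrow \pi^S(X,x)_{\bar{k}}\cong\pi^S(X_{\bar{k}},\bar{x}).
\]
So $\pi^S(D,x)_{\bar{k}}\to\pi^S(X,x)_{\bar{k}}$ is an isomorphism. Lemma~\ref{isofield} then shows that $\pi^S(D,x)\to\pi^S(X,x)$ is an isomorphism over $k$.

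\textbf{The remaining types.} Apply Proposition~\ref{isodescend} to the closed immersion $f:D\hookrightarrow X$ with $s=x$. Both schemes are geometrically reduced, connected and proper, and the pullback functors exist by Lemma~\ref{pullbackall}. This gives the isomorphism for $*\in\{N,EN,\acute{e}t,E\acute{e}t,uni\}$. Since $\chara k=p>0$, parts (6)–(9) of Proposition~\ref{isodescend} also apply, which covers $*\in\{F,EF,Loc,ELoc\}$.

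\textbf{Expected obstacle.} There is no deep new step here. The point needing the most care is the base-change bookkeeping: confirming that $D_{\bar{k}}$ is connected, and that the numerical and global-generation hypotheses are stable under $\bar{k}/k$. Perfectness of $k$ is exactly what guarantees geometric reducedness and separability, which are needed for Lemma~\ref{LiTianbasechange}.
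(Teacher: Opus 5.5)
Your proposal is correct and follows the paper's proof essentially verbatim. You apply Lemma~\ref{Lan11} over $\bar{k}$, transport the isomorphism of $\pi^S$ back to $k$ via Lemma~\ref{LiTianbasechange} and Lemma~\ref{isofield}, and then invoke Proposition~\ref{isodescend} for the remaining types; your extra base-change bookkeeping (connectedness of $D_{\bar{k}}$, stability of ampleness, global generation and intersection numbers) simply makes explicit what the paper leaves implicit.
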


\begin{proof}
    By Lemma~\ref{LiTianbasechange} and Lemma~\ref{Lan11}, we have the following commutative diagram
    \[
        \begin{tikzcd}
        \pi^S(D_{\bar{k}},\bar{x})\arrow[r,"\cong"]\arrow[d,"\cong"]&\pi^S(X_{\bar{k}},\bar{x})\arrow[d,"\cong"]\\
            \pi^S(D,x)_{\bar{k}}\arrow[r]&\pi^S(X,x)_{\bar{k}}
        \end{tikzcd}
    \]
    It follows that $\pi^S(D,x)_{\bar{k}}\rightarrow\pi^S(X,x)_{\bar{k}}$ is an isomorphism. Hence the induced homomorphism $\pi^S(D,x)\rightarrow \pi^{S}(X,x)$ is an isomorphism by Lemma~\ref{isofield}. Then by Proposition~\ref{isodescend}, it follows immediately.
\end{proof}

\begin{Remark}
    Let $k$ be an algebraically closed field of characteristic $p>0$, $X$ a normal connected projective variety over $k$ of dimension $d\geq 3$, $F_X:X\rightarrow X$ the absolute Frobenius morphism. By \cite[Theorem~2.1]{BhJo14}, if $E\in\Vect(X)$ is trivial over an ample divisor, then $E$ is trivialized by a torsor for a finite connected $k$-group scheme. In particular, $F_X^{n}E\cong \mathcal{O}_X^{\oplus r}$ for $n\gg0$, which implies $E\in\mathcal{C}^{Loc}(X)$ and maybe $E\not\cong \mathcal{O}_X^{\oplus r}$. So the positivity of divisor in Lemma~\ref{Lan11} and Proposition~\ref{app+} is of great importance.
\end{Remark}

\subsection{The case of varieties liftable to $W_2(k)$}

\begin{Lemma}[{\cite[Corollary~11.3]{Lan11}}]\label{Lanlifting}
    Let $k$ be a perfect field of characteristic $p>0$, $X$ a smooth variety proper over $k$ of dimension $d$ liftable to $W_2(k)$, $D\subsetneq X$ a smooth ample effective divisor, $x\in D(k)$. Then
    \begin{enumerate}
        \item If $d\geq 2$, then the induced homomorphism $\pi^S(D,x)\rightarrow \pi^S(X,x)$ is faithfully flat.
        \item If $d\geq 3$ and $p\geq 3$, then the induced homomorphism $\pi^S(D,x)\rightarrow \pi^S(X,x)$ is an isomorphism.
    \end{enumerate}
\end{Lemma}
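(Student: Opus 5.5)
The plan is to reduce to the algebraically closed case, where this is Langer's result, and then descend to $k$ using the base change lemmas stated above. This is the same pattern as in the proofs of Proposition~\ref{App0} and Proposition~\ref{app+}.

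\textbf{Reduction to $\bar{k}$.} Since $k$ is perfect, $\bar{k}/k$ is separable. A lifting $\tilde{X}$ of $X$ over $W_2(k)$ base changes to a lifting $\tilde{X}\times_{W_2(k)}W_2(\bar{k})$ of $X_{\bar{k}}$. Moreover $X_{\bar{k}}$ is smooth and proper of the same dimension $d$, and $D_{\bar{k}}$ is a smooth ample effective divisor on it. So the hypotheses of \cite[Corollary~11.3]{Lan11} hold for $(X_{\bar{k}},D_{\bar{k}},\bar{x})$. That result gives:
\begin{itemize}
\item if $d\geq 2$, then $\pi^S(D_{\bar{k}},\bar{x})\to\pi^S(X_{\bar{k}},\bar{x})$ is faithfully flat;
\item if $d\geq 3$ and $p\geq 3$, then this map is an isomorphism.
\end{itemize}

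\textbf{Descent to $k$.} By Lemma~\ref{LiTianbasechange}, the vertical maps in the commutative square
\[
\begin{tikzcd}
\pi^S(D_{\bar{k}},\bar{x})\arrow[r]\arrow[d,"\cong"]&\pi^S(X_{\bar{k}},\bar{x})\arrow[d,"\cong"]\\
\pi^S(D,x)_{\bar{k}}\arrow[r]&\pi^S(X,x)_{\bar{k}}
\end{tikzcd}
\]
are isomorphisms. Hence the bottom arrow is faithfully flat, respectively an isomorphism. Lemma~\ref{isofield} then transfers this property to $\pi^S(D,x)\to\pi^S(X,x)$ over $k$.

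\textbf{Content of Langer's argument over $\bar{k}$.} If one wants to reprove \cite[Corollary~11.3]{Lan11} rather than cite it, I would go through the Tannakian criteria of Section~3.
\begin{itemize}
\item \emph{Faithful flatness ($d\geq 2$).} By Corollary~\ref{H01generalsur}, applied with $\mathcal{C}_X=\mathcal{C}^{NF}(X)$, which is saturated by Remark~\ref{tannakianrelation}, it suffices to show two things. First, $H^i(X,E(-D))=0$ for $i=0,1$ and all numerically flat $E$. Second, irreducible numerically flat bundles restrict to irreducible ones.
\item \emph{Vanishing.} This is where $W_2$-liftability enters. Numerically flat bundles are strongly semistable with vanishing Chern classes. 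A Deligne--Illusie type Kodaira vanishing for such bundles twisted by anti-ample line bundles, available under liftability, gives $H^i(X,E(-D))=0$ for $i<d$.
\item \emph{Irreducibility.} This would follow from a restriction theorem for stable bundles with numerically trivial Chern classes.
\item \emph{Closed immersion ($d\geq 3$).} By Proposition~\ref{Lefschetzclosedimmersion}, one must extend numerically flat bundles on $D$ to germs in $\GLF(D)$. The vanishing for $i=1,2$ controls the obstructions to lifting through infinitesimal neighbourhoods of $D$. Grothendieck's effective Lefschetz condition then algebraizes the formal bundle to an open $U\supset D$, and Lemma~\ref{pushforwardopenimmersion} extends it over $X$. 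The extra assumption $p\geq 3$ is needed for the relevant vanishing or Frobenius-descent step.
\end{itemize}

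\textbf{Main obstacle.} This last extension step is the main difficulty. I would cite it rather than redo it, so that the proof of the lemma itself is just the base change reduction above.
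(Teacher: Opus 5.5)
Your proof is correct and takes essentially the paper's route. The paper gives no argument beyond citing \cite[Corollary~11.3]{Lan11}, and your reduction to $\bar k$ via Lemma~\ref{LiTianbasechange} (valid because $\bar k/k$ is separable for perfect $k$) and Lemma~\ref{isofield} is the same descent pattern the paper uses in Propositions~\ref{App0}, \ref{app+} and \ref{applift}(2). One small correction to your optional sketch, which your proof does not rely on: as far as I know, the Deligne--Illusie type vanishing under $W_2$-liftability gives $H^i(X,E(-D))=0$ only for $i<\min(d,p)$, not for all $i<d$. That range matches the hypotheses: the degree $0,1$ vanishing in (1) needs only $d\geq 2$, while the degree $2$ vanishing in (2) forces $d\geq 3$ and $p\geq 3$.
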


\begin{Proposition}\label{applift}
    Let $k$ be a perfect field of characteristic $p>0$, $X$ a smooth variety proper over $k$ of dimension $d$ liftable to $W_2(k)$, $*\in\{N,EN,F, EF,Loc,ELoc,\acute{e}t,E\acute{e}t,uni\}$, $D\subsetneq X$ a smooth ample effective divisor, $x\in D(k)$. Then
    \begin{enumerate}
        \item If $d\geq 2$, then the induced homomorphism $\pi^*(D,x)\rightarrow \pi^*(X,x)$ is faithfully flat.
        \item If $d\geq 3$ and $p\geq 3$, then the induced homomorphism $\pi^*(D,x)\rightarrow \pi^*(X,x)$ is an isomorphism.
    \end{enumerate}
\end{Proposition}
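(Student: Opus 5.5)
The plan is to reduce everything to the $S$-fundamental group scheme, where Lemma~\ref{Lanlifting} applies directly over the perfect field $k$. This follows the same pattern as the proofs of Proposition~\ref{App0} and Proposition~\ref{app+}. The only novelty compared with those proofs is that part (1) is stated for the categories $*\in\{N,EN,F,EF,Loc,ELoc,\acute{e}t,E\acute{e}t,uni\}$, so faithful flatness must be transported from $\pi^S$ to each $\pi^*$.

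For part (2), assume $d\geq 3$ and $p\geq 3$. By [Lemma~\ref{Lanlifting}, (2)] the induced homomorphism $\pi^S(D,x)\rightarrow \pi^S(X,x)$ is an isomorphism. No base change to $\bar{k}$ is needed, since Lemma~\ref{Lanlifting} is already stated over a perfect field. Now apply Proposition~\ref{isodescend} to the closed immersion $D\hookrightarrow X$. Both schemes are smooth, hence geometrically reduced because $k$ is perfect, and $D$ is connected, being ample in $X$ with $d\geq 2$. Since $\chara k>0$, all items (1)--(9) of Proposition~\ref{isodescend} apply, giving the isomorphism for every listed $*$.

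For part (1), assume $d\geq 2$. By [Lemma~\ref{Lanlifting}, (1)], $\pi^S(D,x)\rightarrow\pi^S(X,x)$ is faithfully flat. By Remark~\ref{tannakianrelation}, $\mathcal{C}^*(X)$ and $\mathcal{C}^*(D)$ are Tannakian subcategories of $\mathcal{C}^{NF}(X)$ and $\mathcal{C}^{NF}(D)$. By Lemma~\ref{pullbackall}, restriction maps $\mathcal{C}^*(X)$ into $\mathcal{C}^*(D)$. We therefore get a commutative square
\[
\begin{tikzcd}
\pi^S(D,x)\arrow[r,two heads]\arrow[d,two heads]&\pi^S(X,x)\arrow[d,two heads]\\
\pi^*(D,x)\arrow[r]&\pi^*(X,x)
\end{tikzcd}
\]
in which the vertical arrows are faithfully flat. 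The composite $\pi^S(D,x)\rightarrow\pi^*(X,x)$ is thus faithfully flat. Since it factors as $\pi^S(D,x)\rightarrow\pi^*(D,x)\rightarrow\pi^*(X,x)$, the second factor is faithfully flat.

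The step that needs care is this last factorization claim: if $A\rightarrow B\rightarrow C$ has faithfully flat composite, then $B\rightarrow C$ is faithfully flat. I would verify it with [Theorem~\ref{Thm1}, (1)] applied to $\Rep_k^f(C)\rightarrow\Rep_k^f(B)\rightarrow\Rep_k^f(A)$, checking fullness and closure under subobjects.
\begin{itemize}
\item \emph{Fullness.} The functor $\Rep_k^f(B)\rightarrow\Rep_k^f(A)$ is faithful. Hence fullness of $\Rep_k^f(C)$ inside $\Rep_k^f(A)$ forces fullness inside $\Rep_k^f(B)$.
\item \emph{Subobjects.} Take a subobject of the image of some $V\in\Rep_k^f(C)$ in $\Rep_k^f(B)$. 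Its image in $\Rep_k^f(A)$ comes from a subobject $V'\subseteq V$ in $\Rep_k^f(C)$. Both subobjects correspond to the same subspace of the common underlying vector space, since all fibre functors are compatible. Hence they coincide in $\Rep_k^f(B)$.
\end{itemize}
Alternatively, this is the standard fact that a faithfully flat composite of affine group schemes has faithfully flat second factor. This is the step I expect to need the most care, but it is formal and is exactly the argument already used implicitly in Proposition~\ref{App0}.
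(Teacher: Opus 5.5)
Your proposal is correct and follows essentially the paper's proof. Part (1) is the same commutative-square argument, where you additionally spell out the formal fact that a faithfully flat composite has faithfully flat second factor. Part (2) is likewise Lemma~\ref{Lanlifting} followed by Proposition~\ref{isodescend}. The only difference is that the paper, in (2), first obtains the $\pi^S$ isomorphism over $\bar{k}$ and then descends it via Lemma~\ref{LiTianbasechange} and Lemma~\ref{isofield}. As you note, this detour is unnecessary given that Lemma~\ref{Lanlifting} is stated over a perfect field, and the paper itself skips it in part (1).
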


\begin{proof}
    (1) By Lemma~\ref{Lanlifting}, we have the following commutative diagram
    \[
        \begin{tikzcd}
        \pi^S(D,x)\arrow[r,two heads]\arrow[d,two heads]&\pi^S(X,x)\arrow[d,two heads]\\
            \pi^*(D,x)\arrow[r]&\pi^*(X,x)
        \end{tikzcd}
    \]
    It follows that the natural homomorphism $\pi^*(D,x)\rightarrow \pi^*(X,x)$ is faithfully flat.

    (2) By Lemma~\ref{LiTianbasechange} and Lemma~\ref{Lanlifting}, we have the following commutative diagram
    \[
        \begin{tikzcd}
        \pi^S(D_{\bar{k}},\bar{x})\arrow[r,"\cong"]\arrow[d,"\cong"]&\pi^S(X_{\bar{k}},\bar{x})\arrow[d,"\cong"]\\
            \pi^S(D,x)_{\bar{k}}\arrow[r]&\pi^S(X,x)_{\bar{k}}
        \end{tikzcd}
    \]
    It follows that $\pi^S(D,x)_{\bar{k}}\rightarrow\pi^S(X,x)_{\bar{k}}$ is an isomorphism. Hence the natural homomorphism $\pi^S(D,x)\rightarrow \pi^{S}(X,x)$ is an isomorphism by Lemma~\ref{isofield}. Then by Proposition~\ref{isodescend}, it follows immediately.
\end{proof}



\begin{thebibliography}{99}

\bibitem{AdAm25} P. Adroja and S. Amrutiya, \emph{On an extension of Nori and local fundamental group schemes}, Comm. Algebra {\bf 53} (2025), no.~10, 4241--4255.

\bibitem{AmBi10} S. Amrutiya, I. Biswas, \emph{On the F-fundamental group scheme}, Bull. Sci. Math. {\bf 134} (2010), no. 5, 461--474.

\bibitem{BiHo07} I. Biswas, Y. I. Holla, \emph{Comparison of fundamental group schemes of a projective variety and an ample hypersurface}, J. Algebraic Geom. 16 (2007), no. 3, 547–597.

\bibitem{BhJo14} B. Bhatt and A.~J. de~Jong, \emph{Lefschetz for local Picard groups}, Ann. Sci. \'Ec. Norm. Sup\'er. (4) {\bf 47} (2014), no.~4, 833--849.

\bibitem{EHS07} H. Esnault, P. H. Hai, X. Sun, \emph{On Nori's fundamental group scheme}, Geometry and dynamics of groups and spaces, 377--398, Progr. Math., 265, Birkhäuser, Basel, 2007.

\bibitem{FuLa22} M. Fulger, A. Langer, \emph{Positivity vs. slope semistability for bundles with vanishing discriminant}, J. Algebra. {\bf 609} (2022), 657--687.

\bibitem{Gro60} A. Grothendieck, \emph{Revêtements étales et groupe fondamental}, Séminaire de Géométrie Algébrique du Bois-Marie, (SGA 1), 1960/61, Lectures Notes in Mathematics, Vol. 224, Springer-Verlag, Berlin, (1971).

\bibitem{Har70} R. Hartshorne, {\it Ample subvarieties of algebraic varieties}, Lecture Notes in Mathematics, Vol. 156, Springer, Berlin-New York, 1970.

\bibitem{Har94} R. Hartshorne, \emph{Generalized divisors on Gorenstein schemes}, $K$-Theory {\bf 8} (1994), no.~3, 287--339.

\bibitem{HuLe10} D. Huybrechts and M. Lehn, \emph{The geometry of moduli spaces of sheaves}, second edition, Cambridge Mathematical Library, Cambridge Univ. Press, Cambridge, 2010.

\bibitem{Lan11} A. Langer, \emph{On the S-fundamental group scheme}, Ann. Inst. Fourier (Grenoble) {\bf 61} (2011), no.~5, 2077--2119 (2012).

\bibitem{LiTiBa26} L. Li, N. Tian, \emph{Base change of fundamental group schemes}, https://doi.org/10.48550/arXiv.2602.11110.

\bibitem{LiTiKu26} L. Li, N. Tian, \emph{The K\"unneth formula of fundamental group schemes}, https://doi.org/10.48550/arXiv.2602.14207.

\bibitem{MeSu08} V.~B. Mehta and S. Subramanian, \emph{Some remarks on the local fundamental group scheme}, Proc. Indian Acad. Sci. Math. Sci. {\bf 118} (2008), no.~2, 207--211.

\bibitem{Mil12} J. S. Milne, \emph{Basic theory of affine group schemes}, 2012, Available at www.jmilne.org/math/.

\bibitem{Nor76} M. V. Nori, \emph{On the representations of the fundamental group}, Compositio Math. {\bf 33} (1976), no.~1, 29--41.

\bibitem{Nor82} M. V. Nori, \emph{The fundamental group-scheme}, Proc. Indian Acad. Sci. Math. Sci. {\bf 91} (1982), no.~2, 73--122.

\bibitem{Ota17} S. Otabe, \emph{An extension of Nori fundamental group}, Comm. Algebra {\bf 45} (2017), no.~8, 3422--3448.

\end{thebibliography}
\end{document}